\numberwithin{equation}{section}
\newtheorem{thm}{Theorem}[section]
\newtheorem{defi}{Definition}[section]
\newtheorem{lem}{Lemma}[section]
\newtheorem{pro}{Proposition}[section]
\newtheorem{remark}{Remark}
\newtheorem{assum}{Assumption}
\DeclareMathOperator{\Tr}{Tr}
\DeclareMathOperator{\Vect}{vec}
\DeclareMathOperator{\e}{exp}
\DeclareMathOperator{\Vech}{vech}
\DeclareMathOperator{\Max}{max}
\DeclareMathOperator{\Min}{min}
\DeclareMathOperator{\diag}{diag}
\journal{****}
\begin{document}

	\begin{frontmatter}
		
		\title{Computing stabilizing feedback gains for stochastic linear systems via policy iteration method}
		\author[label1]{Xinpei Zhang} 
		\ead{zhangxinpei@mail.sdu.edu.cn}
		
		\author[label1,label2]{Guangyan Jia\corref{cor1}} 
		\ead{jiagy@sdu.edu.cn}
		
		\cortext[cor1]{Corresponding author}
		
		\affiliation[label1]{organization={Zhongtai Securities Institute for Financial Studies, Shandong University},
			addressline={27 Shanda Nanlu},
			city={Jinan},
			postcode={250100}, 
			state={Shandong},
			country={P.R.China}}
		
		\affiliation[label2]{organization={School of Mathematics, Shandong University},
			addressline={27 Shanda Nanlu},
			city={Jinan},
			postcode={250100}, 
			state={Shandong},
			country={P.R.China}}

		\begin{abstract}
			In recent years, stabilizing unknown dynamical systems has became a critical problem in control systems engineering. Addressing this for linear time-invariant (LTI) systems is an essential fist step towards solving similar problems for more complex systems. In this paper, we develop a model-free reinforcement learning algorithm to compute stabilizing feedback gains for stochastic LTI systems with unknown system matrices. This algorithm proceeds by solving a series of discounted stochastic linear quadratic (SLQ) optimal control problems via policy iteration (PI). And the corresponding discount factor gradually decreases according to an explicit rule, which is derived from the equivalent condition in verifying the stabilizability. We prove that this method can return a stabilizer after finitely many steps. Finally, a numerical example is provided to illustrate the effectiveness of the proposed method.
		\end{abstract}

		\begin{keyword}
			Reinforcement learning \sep stabilization \sep stochastic linear time-invariant system \sep discounted stochastic linear quadratic optimal control problem \sep policy iteration
		\end{keyword}
	\end{frontmatter}

	\section{Introduction}\label{sec1}
		Over the past few years, reinforcement learning has made significant advances in solving stochastic optimal control problems (\citet{sutton1998reinforcement,bertsekas2019reinforcement}), especially in solving infinite-horizon SLQ optimal control problems where drift and diffusion terms in the dynamics involve the state and control. Related work includes: \citet{Zhang2025Convergence} solved such problems with random initial state via gradient method under full system knowledge; \citet{li2022stochastic} proposed an online PI algorithm to obtain the optimal controller for infinite-horizon SLQ problems with partial system information; Based on the adaptive dynamic programming, \citet{zhang2023adaptive} extended the result of \citet{li2022stochastic} to the case where all system coefficient matrices are unknown; among others. Note that nearly all these papers assume that an initial stabilizing feedback gain is known. However, obtaining stabilizers is known to be challenging in the model-free setting (\citet{zhang2023adaptive,jiang2012computational}). Consequently, the dependence of initial stabilizers significantly limits the application of these reinforcement learning algorithms. From this, at the present stage, synthesizing an initial stabilizer for SLQ problems emerges as a critical problem in control systems engineering.
		
		In this background, this paper is devoted to the computation of stabilizers for stochastic LTI systems with unknown dynamics matrices. The idea arises from the fact that the stabilizing feedback gains are much easier to obtain for the discounted SLQ problems with large discount factors. Further, as the discount factor $\alpha$ decreases, the domain of the corresponding infinite-horizon discounted SLQ problem progressively converges to the set of all stabilizers for the original stochastic LTI system. Guided by these observations, our algorithm starts from a stabilizer for highly discounted problems, then alternates iteratively between updating the policy via PI and decreasing the discount factor while ensuring the stability. This algorithm terminates when $\alpha \leq 0$, yielding a stabilizing feedback gain for the LTI system.
		
		Our work is inspired by the recently developed discount method, which is a class of system synthesis methods built upon discounted optimal control problems with varying discount factors. This method was originally developed for escaping locally optimal policy in multi-agent control systems (\citet{feng2020escaping,feng2021damping}). Subsequently, it was applied to compute a stabilizing feedback gain for discrete-time and continuous-time linear quadratic regulator (LQR) problems with the random initial state. \citet{perdomo2021stabilizing} stabilized both linear and smooth nonlinear discrete-time systems by alternating between obtaining a near-optimal policy via policy gradient and finding a discount factor via binary or random search. A more closely related work to this paper comes from \citet{lamperski2020computing}. It synthesized a stabilizing linear feedback control for discrete-time LQR problems based on PI. However, both aforementioned works require a search procedure for the discount factor. This limitation was addressed by \citet{zhao2024convergence}. They designed an explicit rule to adjust the discount factor. Further, they established the sample complexity of policy gradient methods for data-driven stabilization of discrete-time LTI systems. In addition, \citet{ozaslan2022computing} used the discount method to stabilize continuous-time LQR problems. By updating policy via policy gradient methods, they kept the cost value below a uniform threshold, and thus the finite-time convergence guarantee was provided.
		
		In this paper, different from the work of \citep{perdomo2021stabilizing}, \citep{ lamperski2020computing}, \citep{zhao2024convergence} and \citep{ozaslan2022computing}, we study more complicated It$\hat{\text{o}}$-type stochastic LTI systems where drift and diffusion terms are affected by both the state and control. We propose an off-policy model-free algorithm in which an explicit update rule for decreasing the discount factor is designed by using the equivalent condition in verifying the stabilizability of stochastic LTI systems. This rule provides a uniform lower bound for decrement of the discount factor in each iteration, thereby keeping the total iterations of algorithm finite. Moreover, what is worth mentioning is that the proposed algorithm can synthesize stabilizers for stochastic LTI systems with both deterministic and random initial states.
		
		The rest of this article is organized as follows. In section \ref{sec2}, we describe the stabilizability of stochastic LTI systems with randomized initial states and introduce the discounted SLQ problems with the discount factor $\alpha \geq 0$. In section \ref{sec3}, we propose the discount method to compute stabilizing feedback gains for SLQ problems with both the known and completely unknown system matrices and discuss the feasibility of this method. In addition, we extend this method to stabilize SLQ with the deterministic initial state. Finally, a numerical example is shown in section \ref{sec4}.
		
		\textbf{Notation} We denote by $\mathbb{R}^n$ the $n$-dimensional Euclidean space with the norm $|\cdot|$. Let $\mathbb{R}^{n \times m}$ denote the space of all $(n \times m )$ real matrices. Let $\mathbb{S}^{n}$ denote the set of all $(n \times n)$ real symmetric matrices. The set of all $(n \times n)$ positive definite (resp., positive semi-definite) matrices is denoted by $\mathbb{S}_{+}^{n}$ (resp., $\overline{\mathbb{S}_{+}^{n}}$). We use $\Tr(\cdot)$ to denote the trace of a square matrix. We use $\| \cdot \|_2$ and $\| \cdot \|$ to denote the spectral norm and the Frobenius norm of a matrix, respectively. Let $\lambda_i(\cdot)$ denote the $i$-th smallest eigenvalue of a matrix. Let $A \otimes B$ denote the Kronecker product of matrices $A$ and $B$. We denote by $\Vect{(\cdot)}$ the vectorization of a matrix, which obtained by stacking the columns of the matrix on top of one another. In addition, if $A \in \mathbb{S}_{+}^{n}$ (resp., $A \in \overline{\mathbb{S}_{+}^{n}}$) is a positive definite (resp., positive semi-definite) matrix, we write $A\succ 0$ (resp., $A\succeq 0)$. For any $A, B \in \mathbb{S}^{n}$, we use the notation $A\succ B$ (resp., $A\succeq B)$ to indicate that $A-B\succ 0$ (resp., $A-B\succeq 0)$. 
		
	\section{Problem formulation and preliminaries}\label{sec2}
		\noindent
		\textbf{A. Problem formulation}	
		
		Let $\left(\Omega,\mathcal{F},\mathbb{F},\mathbb{P}\right)$ be a complete filtered probability space on which a standard one-dimensional Brownian motion $W = \left\{W(t)|t \geq 0\right\}$ is defined, where $\mathbb{F} = \left\{ \mathcal{F}_{t} \right\}_{t \geq 0}$ is the natural filtration of $W$ augmented by all the $\mathbb{P}\text{-null}$ sets in $\mathcal{F}$ and an independent $\sigma$-algebra $\mathcal{H}$. In this paper, we consider the following time-invariant stochastic linear system:
		\begin{equation}\label{2.1}
			\begin{cases}
				dX(t) = [AX(t)+Bu(t)]dt + [CX(t)+Du(t)]dW(t),  \quad t\geq 0, \\
				X(0) = \xi_0 \in \mathcal{H},
			\end{cases}
		\end{equation}
		where $X(\cdot)$ is called the state process valued in $\mathbb{R}^n$ with the initial state $\xi_0$ being a $\mathcal{H}$-measurable random variable; $u(\cdot)$ is called the control process valued in $\mathbb{R}^m$. The coefficients $A, C \in \mathbb{R}^{n \times n}$, and $B, D \in \mathbb{R}^{n \times m}$ are constant matrices. Here, the dimension of Brownian motion $W$ is set to be 1 for simplicity. In addition, we briefly denote the above state system (\ref{2.1}) by $[A,C;B,D]$.
		
		\begin{assum}
			For the initial state $X(0)$, we assume $\Sigma_0 := \mathbb{E} X(0)X^{\top}(0)$ is positive-definite.
		\end{assum}
		
		\begin{remark}
			The motivation for using a random initial state $X(0)$ and assuming $\Sigma_0 := \mathbb{E} X(0) X(0)^\top\succ 0$ is to ensure both the well-definedness of $1/\lambda_{1}(\Sigma_0)$ in Lemma \ref{pro3.1} and a strictly positive decrement $\Delta \alpha$ in Algorithm \ref{alg:Model-based discount} and \ref{alg:Model-free discount}.
		\end{remark}
	
		\noindent
		\textbf{B. Mean-square stabilizable}	
		
		\begin{defi}\cite{rami2000linear,sun2020stochastic}\label{defi2.1}
			The system $[A,C; B,D]$ is called mean-square stabilizable if there exists a constant matrix $K \in \mathbb{R}^{m \times n}$, for every initial state $X(0)$, the solution of the following equation
			$$
			dX(t)=(A+BK)X(t)dt+(C+DK)X(t)dW(t) 
			$$
			satisfies $\lim\limits_{t\to+\infty}\mathbb{E}[X(t)^{\top}X(t)]=0$.
			
			In this case, $K$ is called a (mean-square) stabilizer of the system $[A,C; B,D]$, and the feedback control $u(\cdot)=KX(\cdot)$ is called (mean-square) stabilizing. The set of all mean-square stabilizers of $[A,C; B,D]$ is denoted by $\mathcal{K} \equiv \mathcal{K}\left([A,C; B,D]\right)$.
		\end{defi}
		
		Without loss of generality, we assume that the system $[A,C; B,D]$ is mean-square stabilizable, i.e. $\mathcal{K} \neq \emptyset$. The following lemma provides an equivalent characterization for the mean-square stabilizers. For a proof, see (\citet{rami2000linear}, Theorem 1).
		
		\begin{lem}\label{lem2.1}
			A matrix $K \in \mathbb{R}^{m \times n}$ is a stabilizer of the system $[A,C; B,D]$ if and only if there exists a $P \in \mathbb{S}^{n}_{+}$ such that
			$$
			(A+BK)^\top P+ P (A+BK) + (C+DK)^\top P (C+DK) \prec 0.
			$$
			In this case, for any $\Lambda \in \mathbb{S}^{n}$ (respectively, $\Lambda \in \overline{\mathbb{S}_{+}^{n}}$, $\Lambda \in \mathbb{S}^{n}_{+}$), there exists a unique solution $P \in \mathbb{S}^{n}$  (respectively, $P \in \overline{\mathbb{S}_{+}^{n}}$, $P \in \mathbb{S}^{n}_{+}$) to the following matrix equation:
			$$
			(A+BK)^\top P+ P (A+BK) + (C+DK)^\top P (C+DK) + \Lambda = 0.
			$$
		\end{lem}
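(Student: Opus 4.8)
The plan is to prove Lemma \ref{lem2.1} in two parts: first the equivalence, then the existence-uniqueness for the Lyapunov-type equation, relying on the stability characterization of It\^o systems via mean-square exponential stability. Throughout, write $\mathcal{A}_K := A+BK$ and $\mathcal{C}_K := C+DK$, and introduce the linear operator $\mathcal{L}_K$ on $\mathbb{S}^n$ defined by $\mathcal{L}_K(P) := \mathcal{A}_K^\top P + P\mathcal{A}_K + \mathcal{C}_K^\top P\,\mathcal{C}_K$, so that the lemma concerns solvability of $\mathcal{L}_K(P) + \Lambda = 0$ and negativity of $\mathcal{L}_K(P)$.

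For the equivalence, the key observation is that $Y(t) := \mathbb{E}[X(t)X(t)^\top]$, with $X$ the closed-loop solution, satisfies the deterministic matrix ODE $\dot Y(t) = \mathcal{A}_K Y(t) + Y(t)\mathcal{A}_K^\top + \mathcal{C}_K Y(t)\mathcal{C}_K^\top$; this follows from It\^o's formula applied to $X(t)X(t)^\top$ and taking expectations. Hence mean-square stability is equivalent to asymptotic stability of this linear ODE on $\mathbb{S}^n$, i.e.\ to the Hurwitz property of the generator $\mathbb{A}_K : Y \mapsto \mathcal{A}_K Y + Y\mathcal{A}_K^\top + \mathcal{C}_K Y \mathcal{C}_K^\top$. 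One then notes that $\mathcal{L}_K$ is (up to the standard vectorization identification $\Vect(\mathbb{S}^n)$) the adjoint of $\mathbb{A}_K$, so the two operators have the same spectrum; therefore $K$ is a stabilizer iff $\mathcal{L}_K$ is Hurwitz as an operator on $\mathbb{S}^n$. For the forward direction one produces $P$ as $P = \int_0^\infty e^{\mathcal{L}_K^* t} I\, e^{\cdots}$—more precisely, since $\mathcal{L}_K$ is Hurwitz one may solve $\mathcal{L}_K(P) + I = 0$ and the resulting $P$ is positive definite; for the converse, if $\mathcal{L}_K(P) \prec 0$ for some $P\succ 0$, then $V(Y):=\Tr(PY)$ is a strict Lyapunov function for the ODE $\dot Y = \mathbb{A}_K(Y)$ along trajectories issuing from rank-one $Y(0)=\mathbb{E}[X(0)X(0)^\top]$, forcing $\Tr(PY(t))\to 0$ and hence $\mathbb{E}|X(t)|^2\to 0$ by equivalence of norms.

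For the second statement, once we know $K\in\mathcal{K}$ is equivalent to $\mathcal{L}_K$ being Hurwitz (hence invertible, with $0$ not in its spectrum), existence and uniqueness of $P$ solving $\mathcal{L}_K(P) + \Lambda = 0$ is immediate for every $\Lambda\in\mathbb{S}^n$: $P = -\mathcal{L}_K^{-1}(\Lambda)$. The sign/definiteness refinement follows from the integral representation $P = \int_0^\infty e^{\mathbb{A}_K^* t}(\Lambda)\,dt$ (equivalently $\Vect P = -(\text{matrix of }\mathcal{L}_K)^{-1}\Vect\Lambda$), since $e^{\mathbb{A}_K^* t}$ is a positive operator on $\mathbb{S}^n$ (it maps $\overline{\mathbb{S}^n_+}$ to itself, being the transition semigroup of a linear matrix ODE preserving positive semidefiniteness), so $\Lambda\succeq 0$ gives $P\succeq 0$ and $\Lambda\succ 0$ gives $P\succ 0$; uniqueness again comes from invertibility of $\mathcal{L}_K$.

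The main obstacle I expect is making the operator-theoretic bookkeeping precise—in particular, verifying carefully that $e^{\mathbb{A}_K t}$ is positivity-preserving (this is where one must argue that the matrix ODE $\dot Y = \mathbb{A}_K(Y)$ keeps $Y(t)\succeq 0$ whenever $Y(0)\succeq 0$, which can be seen from the probabilistic representation $Y(t)=\mathbb{E}[\Phi(t)Y(0)\Phi(t)^\top]$ for the closed-loop fundamental matrix $\Phi$, or via a Trotter-type argument) and confirming the adjoint relationship between $\mathcal{L}_K$ and $\mathbb{A}_K$ under vectorization so that ``$\mathcal{L}_K$ Hurwitz'' and ``$\mathbb{A}_K$ Hurwitz'' are genuinely the same condition. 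Since this lemma is quoted from \citet{rami2000linear} (Theorem 1), in the write-up it suffices to record these steps at the level of detail above and refer there for the remaining routine verifications.
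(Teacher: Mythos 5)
Your proposal is sound, but note that the paper does not prove Lemma \ref{lem2.1} at all: it is quoted verbatim from \citet{rami2000linear} (Theorem 1), so there is no in-paper argument to compare against. What you have written is a correct reconstruction of the standard proof underlying that reference: pass to the second-moment flow $\dot Y=\mathbb{A}_K(Y)$, identify $\mathcal{L}_K$ as the trace-adjoint of $\mathbb{A}_K$ (hence equal spectra), and get the converse direction from the Lyapunov function $\Tr(PY)$ and the forward direction plus the solvability statement from $P=-\mathcal{L}_K^{-1}(\Lambda)=\int_0^\infty e^{t\mathcal{L}_K}(\Lambda)\,dt$ together with positivity preservation of the semigroup. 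Two details are worth pinning down in a write-up. First, Definition \ref{defi2.1} only gives $Y(t)\to 0$ for initial data of the form $Y(0)=\mathbb{E}[X(0)X(0)^\top]\succeq 0$; to conclude that $\mathbb{A}_K$ is Hurwitz you must observe that rank-one matrices $xx^\top$ span $\mathbb{S}^n$ (e.g.\ via $xy^\top+yx^\top=(x+y)(x+y)^\top-xx^\top-yy^\top$) so that linearity extends the decay to all of $\mathbb{S}^n$. Second, in the Lyapunov-function step you need $Y(t)\succeq 0$ for all $t$ in order to pass from $\Tr(\mathcal{L}_K(P)Y)\le 0$ to strict exponential decay and from $\Tr(PY(t))\to 0$ to $\Tr(Y(t))\to 0$; this is exactly the positivity-preservation property you already flag, obtained from $Y(t)=\mathbb{E}[\Phi(t)Y(0)\Phi(t)^\top]$. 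Also, for $\Lambda\succ 0$ the strict positivity $P\succ 0$ follows because $e^{t\mathcal{L}_K}(\Lambda)\succ 0$ for $t$ in a neighborhood of $0$ by continuity. With those points made explicit, the argument is complete; alternatively, simply citing \citet{rami2000linear} as the paper does suffices.
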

		
		\noindent
		\textbf{C. The discounted SLQ optimal control problem}
		
		For the discount factor $\alpha \geq 0$, the discounted SLQ optimal control problem is defined as
		\begin{equation}\label{2.2}
			\begin{aligned} 
				&\Min  \: \mathbb{E} \Big[ \int_{0}^{+\infty} \e(-2\alpha t) \cdot  \left(X(t)^{\top}QX(t) + u(t)^{\top}Ru(t) \right)dt \Big],  \\
				&\text{subject to } \left(\ref{2.1}\right),
			\end{aligned}	
		\end{equation}
		where $Q \in \mathbb{S}_{+}^{n}$ , $ R\in \mathbb{S}_{+}^{m}$ are given constant matrices.
		
		Let $\widetilde{X}(t) = \e(-\alpha t)X(t)$ and $\widetilde{u}(t) = \e(-\alpha t)u(t)$, by It$\hat{\text{o}}$ formula, 
		\begin{equation}\label{2.3}
			d\widetilde{X}(t) = [A_{\alpha}\widetilde{X}(t)+ B \widetilde{u}(t)]dt + [C\widetilde{X}(t) + D\widetilde{u}(t)]dW(t), 
		\end{equation}
		where $A_{\alpha} := A - \alpha I_n$. We denote this system (\ref{2.3}) by $[A_\alpha, C; B,D]$ for every discount factor $\alpha \geq 0$. 
		
		By introducing exponentially weighted state $\widetilde{X}(t)$ and input $\widetilde{u}(t)$, the discounted SLQ problem (\ref{2.2}) is equivalent to the following undiscounted SLQ problem:
		\begin{equation*}
			\begin{aligned} 
				&\Min \: \mathbb{E} \Big[ \int_{0}^{+\infty} \widetilde{X}(t)^{\top}Q\widetilde{X}(t) + \widetilde{u}(t)^{\top} R \widetilde{u}(t) dt  \Big],  \\
				&\text{subject to } \left(\ref{2.3} \right) \text{ and } \widetilde{X}(0)= \xi_0 \in \mathcal{H} . 
			\end{aligned}
		\end{equation*}
		Consequently, some properties of the standard SLQ problem extend directly to its discounted counterpart. In addition, in Section \ref{sec3},	our analysis is building upon the above equivalence relation.
				
		Given that this paper aims to compute stabilizers for stochastic LTI systems within a reinforcement learning (RL) framework, we, unless otherwise specified, confine our analysis to the following linear state-feedback control
		$$
			\widetilde{u}(\cdot) = K \widetilde{X}(\cdot),
		$$
		where the policy is linearly parameterized by the constant matrix $K \in \mathbb{R}^{m \times n}$. Now the state dynamics can be written as
		\begin{equation}\label{2.4a}
			d\widetilde{X}(t) = \left(A_{\alpha}+ BK\right)\widetilde{X}(t)dt + \left(C+ DK\right)\widetilde{X}(t)dW(t)
		\end{equation}
		and the corresponding cost function is denoted as
		\begin{equation}\label{2.4}
			J_\alpha(K):= \mathbb{E} \int_{0}^{+\infty} \widetilde{X}(t)^{\top}\left( Q + K^\top RK \right)\widetilde{X}(t)  dt. 
		\end{equation}
		
		We denote by $\mathcal{K}^{(\alpha)}$ the set of all stabilizers of the system $[A_\alpha, C; B,D]$. The feasible set $\mathcal{K}^{(\alpha)}$ shrinks as parameter $\alpha$ decreases. The following lemma shows this result in detail.
		\begin{lem}\label{lem2.2}
			For $\alpha_1, \alpha_2 \geq 0$, if $\alpha_1 \leq \alpha_2$, then $\mathcal{K}^{(\alpha_1)} \subseteq \mathcal{K}^{(\alpha_2)}$.
		\end{lem}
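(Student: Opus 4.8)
The plan is to use the Lyapunov-type characterization of stabilizers given in Lemma \ref{lem2.1}, applied to the system $[A_\alpha,C;B,D]$ rather than $[A,C;B,D]$. Recall that $K\in\mathcal{K}^{(\alpha)}$ means precisely that $K$ stabilizes $[A_\alpha,C;B,D]$, so by Lemma \ref{lem2.1} this is equivalent to the existence of $P\in\mathbb{S}^n_+$ with
$$
(A_\alpha+BK)^\top P+P(A_\alpha+BK)+(C+DK)^\top P(C+DK)\prec 0.
$$
So it suffices to show: if such a $P$ exists for $\alpha_1$, then the \emph{same} $P$ works for $\alpha_2$ whenever $\alpha_1\le\alpha_2$.

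First I would record the elementary algebraic identity $A_{\alpha_2}=A-\alpha_2 I_n=(A-\alpha_1 I_n)-(\alpha_2-\alpha_1)I_n=A_{\alpha_1}-(\alpha_2-\alpha_1)I_n$, hence $A_{\alpha_2}+BK=(A_{\alpha_1}+BK)-(\alpha_2-\alpha_1)I_n$. Substituting this into the Lyapunov expression for $\alpha_2$ gives
$$
(A_{\alpha_2}+BK)^\top P+P(A_{\alpha_2}+BK)+(C+DK)^\top P(C+DK)
$$
$$
=(A_{\alpha_1}+BK)^\top P+P(A_{\alpha_1}+BK)+(C+DK)^\top P(C+DK)-2(\alpha_2-\alpha_1)P.
$$

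To finish, take $K\in\mathcal{K}^{(\alpha_1)}$ and pick $P\succ 0$ witnessing this via Lemma \ref{lem2.1}; then the first group of terms on the right-hand side above is $\prec 0$. Since $\alpha_2-\alpha_1\ge 0$ and $P\succ 0$, the correction term $-2(\alpha_2-\alpha_1)P$ is $\preceq 0$, and the sum of a negative-definite matrix and a negative-semidefinite matrix is negative definite. Thus the Lyapunov inequality holds for $[A_{\alpha_2},C;B,D]$ with the same $P$, so Lemma \ref{lem2.1} yields $K\in\mathcal{K}^{(\alpha_2)}$, proving $\mathcal{K}^{(\alpha_1)}\subseteq\mathcal{K}^{(\alpha_2)}$. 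There is no real obstacle here; the only "insight" needed is to apply Lemma \ref{lem2.1} to the shifted system and observe that monotonicity in $\alpha$ enters only through the sign of $-2(\alpha_2-\alpha_1)P$, so the certificate $P$ can be reused verbatim.
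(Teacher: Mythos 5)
Your proposal is correct and follows essentially the same route as the paper: both reduce $A_{\alpha_2}+BK=(A_{\alpha_1}+BK)-(\alpha_2-\alpha_1)I_n$, observe that the Lyapunov expression for $\alpha_2$ equals the one for $\alpha_1$ minus $2(\alpha_2-\alpha_1)P\succeq 0$, and invoke Lemma \ref{lem2.1} to conclude. The only cosmetic difference is that the paper instantiates the certificate $P$ as the solution of the Lyapunov \emph{equation} with a chosen $\Lambda\succ 0$, whereas you take $P$ directly from the strict Lyapunov \emph{inequality}; both are equivalent via Lemma \ref{lem2.1}.
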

		\begin{proof}
			For any $K_{\alpha_1} \in \mathcal{K}^{(\alpha_1)}$ and $\Lambda_{\alpha_1} \in\mathbb{S}_{+}^{n}$, it follows from Lemma \ref{lem2.1} that there exists a unique solution $P_{\alpha_1} \in \mathbb{S}_{+}^{n}$ to the following matrix equation:
			$$
			(A_{\alpha_1}+BK_{\alpha_1})^\top P_{\alpha_1}+ P_{\alpha_1} (A_{\alpha_1}+BK_{\alpha_1}) + (C+DK_{\alpha_1})^\top P_{\alpha_1} (C+DK_{\alpha_1}) + \Lambda_{\alpha_1} = 0.
			$$	
			Denote $\Delta \alpha_1 := \alpha_2-\alpha_1 \geq 0$, then 
			\begin{equation*}
				\begin{aligned}
					&(A_{\alpha_2}+BK_{\alpha_1})^\top P_{\alpha_1}+ P_{\alpha_1} (A_{\alpha_2}+BK_{\alpha_1}) + (C+DK_{\alpha_1})^\top P_{\alpha_1} (C+DK_{\alpha_1}) \\
					&\quad = (A_{\alpha_1}+BK_{\alpha_1})^\top P_{\alpha_1}+ P_{\alpha_1} (A_{\alpha_1}+BK_{\alpha_1}) \\
					&\quad \quad + (C+DK_{\alpha_1})^\top P_{\alpha_1} (C+DK_{\alpha_1}) -2\Delta \alpha_1 P_{\alpha_1} \\
					&\quad = -\Lambda_{\alpha_1} -2\Delta \alpha_1 P_{\alpha_1} \prec 0
				\end{aligned}
			\end{equation*}	
			Here, the last partial order follows from the positive definiteness of matrices $\Lambda_{\alpha_1}$ and $P_{\alpha_1}$. Thus, by Lemma \ref{lem2.1}, $K_{\alpha_1}$ is a stabilizer of the system $[A_{\alpha_2},C; B,D]$, i.e., $K_{\alpha_1} \in \mathcal{K}^{\left(\alpha_2\right)}$. Therefore, $\mathcal{K}^{\left(\alpha_1\right)} \subseteq \mathcal{K}^{\left(\alpha_2\right)}$.			 
		\end{proof}
		
		\begin{lem}\label{lem2.3}
			For $0 \leq \alpha_1 \leq \alpha_2$, it holds that 
			$
			J_{\alpha_1}^{*} \geq J_{\alpha_2}^{*},
			$
			where $J_{\alpha}^{*}$ denotes the optimal cost value $J_{\alpha}^{*}:=  \Min_{K \in \mathcal{K}^{(\alpha)}}J_\alpha(K)  $.
		\end{lem}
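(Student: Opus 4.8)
The plan is to reduce the statement to the monotonicity of the weight $e^{-2\alpha t}$ in $\alpha$, using the equivalence between problem (\ref{2.2})--(\ref{2.1}) and its exponentially weighted reformulation (\ref{2.3}) established above, together with the nesting $\mathcal{K}^{(\alpha_1)} \subseteq \mathcal{K}^{(\alpha_2)}$ from Lemma \ref{lem2.2}.

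First I would fix an arbitrary $K \in \mathcal{K}^{(\alpha_1)}$ and express its cost in the original, $\alpha$-independent form. Let $X(\cdot)$ be the solution of the closed-loop equation $dX(t) = (A+BK)X(t)\,dt + (C+DK)X(t)\,dW(t)$ with $X(0) = \xi_0$; this trajectory does not depend on $\alpha$. Applying the change of variables $\widetilde{X}(t) = e^{-\alpha t} X(t)$ in (\ref{2.4}) gives
\[
J_\alpha(K) = \mathbb{E} \int_0^{+\infty} e^{-2\alpha t}\, X(t)^\top \big(Q + K^\top R K\big) X(t)\, dt .
\]
Since $Q \succ 0$ and $R \succ 0$, the running cost $X(t)^\top (Q + K^\top R K) X(t)$ is nonnegative for every $t \geq 0$; and $0 \leq \alpha_1 \leq \alpha_2$ implies $e^{-2\alpha_2 t} \leq e^{-2\alpha_1 t}$ for all $t \geq 0$. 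Hence $J_{\alpha_2}(K) \leq J_{\alpha_1}(K)$, the right-hand side being finite because $K \in \mathcal{K}^{(\alpha_1)}$.

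Next I would pass to optimal values. By Lemma \ref{lem2.2}, $K \in \mathcal{K}^{(\alpha_1)} \subseteq \mathcal{K}^{(\alpha_2)}$, so
\[
J_{\alpha_2}^{*} = \Min_{K' \in \mathcal{K}^{(\alpha_2)}} J_{\alpha_2}(K') \leq J_{\alpha_2}(K) \leq J_{\alpha_1}(K) .
\]
Taking the infimum over $K \in \mathcal{K}^{(\alpha_1)}$ yields $J_{\alpha_2}^{*} \leq J_{\alpha_1}^{*}$, which is the claim; equivalently, one may simply take $K$ to be an optimal gain for the $\alpha_1$-problem, whose existence follows from the standard SLQ theory applied to $[A_{\alpha_1}, C; B, D]$.

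The argument is essentially routine; the only point needing care is ensuring the weighted representation of $J_\alpha(K)$ is used and that the comparison $J_{\alpha_2}(K) \leq J_{\alpha_1}(K)$ is invoked only for $K$ feasible at level $\alpha_1$ (so that the dominating integral is finite) — which is precisely where Lemma \ref{lem2.2} enters. Everything else is monotonicity of the exponential weight and nonnegativity of the quadratic running cost.
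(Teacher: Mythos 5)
Your proposal is correct and follows essentially the same route as the paper: the pointwise monotonicity $J_{\alpha_2}(K)\leq J_{\alpha_1}(K)$ coming from the exponential weight in (\ref{2.4}), combined with the inclusion $\mathcal{K}^{(\alpha_1)}\subseteq\mathcal{K}^{(\alpha_2)}$ from Lemma \ref{lem2.2}. If anything, fixing $K$ in the smaller set $\mathcal{K}^{(\alpha_1)}$ is a slightly cleaner bookkeeping choice than the paper's, since it guarantees the dominating cost is finite.
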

		\begin{proof}
			From (\ref{2.4}), for arbitrary $K \in \mathcal{K}^{(\alpha_2)}$, it holds that
			$$
			J_{\alpha_1}(K) \geq J_{\alpha_2}(K) \geq \underset{K \in \mathcal{K}^{(\alpha_2)}}\Min  J_{\alpha_2}(K).
			$$
			Since $K \in \mathcal{K}^{(\alpha_2)}$ is arbitrary and $\mathcal{K}^{(\alpha_1)} \subseteq \mathcal{K}^{(\alpha_2)}$ (Lemma \ref{lem2.2}), we have
			$$
			\underset{K \in \mathcal{K}^{(\alpha_1)}}\Min  J_{\alpha_1}(K) \geq \underset{K \in \mathcal{K}^{(\alpha_2)}}\Min  J_{\alpha_2}(K),
			$$
			which completes the proof.
		\end{proof}
		
		Additionally, it follows from (Lemma 5, \citet{rami2000linear}), the objective function (\ref{2.4}) can also be written as
		\begin{equation}\label{2.5}
			J_\alpha (K) = \Tr \left(P_\alpha \Sigma_0 \right)
		\end{equation}
		when $K \in \mathcal{K}^{\left(\alpha\right)}$, where $P_\alpha$ is the solution of the following Lyapunov equation:
		\begin{equation}\label{2.6}
			\begin{aligned}
				(A_{\alpha}&+BK)^\top P_\alpha + P_\alpha (A_{\alpha}+BK)  \\
				&+ (C+DK)^\top P_\alpha (C+DK) + Q + K^\top R K =0.
			\end{aligned}
		\end{equation}
		
	\section{Stabilizing linear systems via discount method}\label{sec3}
		\noindent
		\textbf{A. Known model}
		
		We first describe how the proposed algorithm provably synthesizes a stabilizer $K \in \mathcal{K}$ for the system (\ref{2.1}) with known system matrices. In a nutshell, this algorithm is achieved by reducing the stabilization problem  to solving a sequence of discounted SLQ problems via the PI algorithm. We start by choosing a sufficiently large initial discount factor $\alpha_0$ such that $\mathbf{O}_{m \times n} \in \mathcal{K}^{(\alpha_0)}$. 
		
		\begin{lem}\label{lem3.1}
			For the dynamical system $[A_{\alpha_0},C; B,D]$, if the parameter $\alpha_0$ satisfies 
			$$
			\alpha_0 > \frac{1}{2}\left(\lambda_{n}(A+A^\top) + \|C\|_2^2\:\right),
			$$
			then $\mathbf{O}_{m \times n}$ is a  mean-square stabilizer of the system $[A_{\alpha_0},C; B,D]$, i.e. $\mathbf{O}_{m \times n}\in \mathcal{K}^{(\alpha_0)}$.
		\end{lem}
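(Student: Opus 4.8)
The plan is to verify the Lyapunov-type characterization of Lemma~\ref{lem2.1} directly, using the simplest possible certificate $P = I_n \in \mathbb{S}^n_+$. With $K = \mathbf{O}_{m \times n}$, the matrix inequality in Lemma~\ref{lem2.1} applied to the system $[A_{\alpha_0},C; B,D]$ becomes
$$
A_{\alpha_0}^\top + A_{\alpha_0} + C^\top C \prec 0,
$$
and since $A_{\alpha_0} = A - \alpha_0 I_n$ this is exactly
$$
A + A^\top + C^\top C \prec 2\alpha_0 I_n .
$$
So it suffices to bound the largest eigenvalue of the symmetric matrix $A + A^\top + C^\top C$ from above by $2\alpha_0$.

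Next I would invoke the subadditivity of the top eigenvalue (Weyl's inequality): for symmetric $X,Y$ one has $\lambda_{\max}(X+Y) \le \lambda_{\max}(X) + \lambda_{\max}(Y)$. Taking $X = A + A^\top$ and $Y = C^\top C$, and recalling that in the paper's convention $\lambda_n(\cdot)$ is the largest eigenvalue and that $\|C\|_2^2 = \lambda_{\max}(C^\top C)$, this gives
$$
\lambda_{\max}\!\left(A + A^\top + C^\top C\right) \le \lambda_n(A + A^\top) + \|C\|_2^2 < 2\alpha_0 ,
$$
where the strict inequality is precisely the hypothesis imposed on $\alpha_0$. Hence $A + A^\top + C^\top C \prec 2\alpha_0 I_n$, the displayed matrix inequality holds with $P = I_n \succ 0$, and Lemma~\ref{lem2.1} then yields $\mathbf{O}_{m \times n} \in \mathcal{K}^{(\alpha_0)}$, as claimed.

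There is essentially no serious obstacle here; the only care needed is in the eigenvalue bookkeeping — matching the paper's indexing ($\lambda_n$ being the largest eigenvalue), identifying $\|C\|_2^2$ with $\lambda_{\max}(C^\top C)$, and using subadditivity of the top eigenvalue rather than a spurious additivity. If one prefers to avoid quoting Weyl's inequality by name, the final step can instead be run through the Rayleigh quotient: for every unit vector $x \in \mathbb{R}^n$,
$$
x^\top\!\left(A + A^\top + C^\top C\right)x = x^\top\!\left(A + A^\top\right)x + |Cx|^2 \le \lambda_n(A + A^\top) + \|C\|_2^2 < 2\alpha_0,
$$
which delivers $A + A^\top + C^\top C \prec 2\alpha_0 I_n$ directly.
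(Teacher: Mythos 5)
Your proof is correct and follows essentially the same route as the paper's: take $P = I_n$ as the Lyapunov certificate in Lemma \ref{lem2.1}, reduce the claim to $A + A^\top + C^\top C \prec 2\alpha_0 I_n$, and conclude via subadditivity of the largest eigenvalue (the paper's Lemma \ref{lemA.3}) together with $\|C\|_2^2 = \lambda_{\max}(C^\top C)$. Your Rayleigh-quotient variant at the end is just the proof of that subadditivity inlined, so there is no substantive difference.
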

		
		\begin{proof}
			When $K = \mathbf{O}_{m \times n}$, one gets
			\begin{equation*}
				\begin{aligned}
					&(A_{\alpha_0}+BK)^\top P_{\alpha_0}+ P_{\alpha_0} (A_{\alpha_0}+BK) + (C+DK)^\top P_{\alpha_0} (C+DK) \\
					&\quad = A_{\alpha_0}^\top P_{\alpha_0} + P_{\alpha_0}A_{\alpha_0} + C^\top P_{\alpha_0}C \\
					&\quad = A^\top P_{\alpha_0} + P_{\alpha_0}A + C^\top P_{\alpha_0}C -2\alpha_0P_{\alpha_0}.
				\end{aligned}
			\end{equation*}
			If $\alpha_0 > \frac{1}{2} \left(\lambda_{n}(A^\top+A)+ \|C\|_2^2\right)$, we have 
			$$
			\lambda_{n}(A^\top+A + C^\top C -2\alpha_0I_n) \leq \lambda_{n}(A^\top+A)+ \|C\|_2^2 -2\alpha_0 <0.
			$$ 
			Here, the first inequality follows from Lemma \ref{lemA.3}. Then $P_{\alpha_0} = I_n \succ 0$ is a solution of the following inequality:
			$$
			A^\top P_{\alpha_0} + P_{\alpha_0}A + C^\top P_{\alpha_0}C -2\alpha_0P_{\alpha_0} \prec 0.
			$$
			Thus, by Lemma \ref{lem2.1}, $\mathbf{O}_{m \times n}\in \mathcal{K}^{(\alpha_0)}$
		\end{proof}
		
		\begin{remark}
			For the discounted SLQ problem with discount factor $\alpha_0$, one can solve it by using the PI method initialized at $\mathbf{O}_{m \times n} \in \mathcal{K}^{(\alpha_0)}$. The theoretical analysis established by \citet{li2022stochastic} guarantees that the generated policy sequence converges to the corresponding optimal policy $K_{\alpha_0}^{*}$.
			
			With regard to the optimal policy $K_{\alpha_0}^{*}$, an interesting question might be whether it stabilizes the original LTI system $[A,C;B,D]$. Unfortunately, the answer is in the negative. A specific example is stated as follows.
			
			Set  
			\begin{equation*}
				A = 
				\begin{bmatrix} 
					4 & 7 \\ 
					5 & -13
				\end{bmatrix},
				B = 
				\begin{bmatrix} 
					6 \\ 
					1
				\end{bmatrix},
				C = 
				\begin{bmatrix} 
					5 & -1 \\ 
					-3 & 4
				\end{bmatrix},
				D = 
				\begin{bmatrix} 
					2 \\
					8
				\end{bmatrix},
				\Sigma_0 = 
				\begin{bmatrix} 
					1 & 0 \\ 
					0 & 1
				\end{bmatrix}.
			\end{equation*}
			The coefficients in cost functional are chosen as
			\begin{equation*}
				Q = 
				\begin{bmatrix} 
					6 & 0 \\ 
					0 & 3
				\end{bmatrix}, \quad
				R = 2.
			\end{equation*}
			
			By Lemma \ref{lem3.1}, we set $\alpha_0 = 29$. Then implementing the PI algorithm (\citet{li2022stochastic}), one gets the optimal policy $K_{\alpha_0}^{*} = (-0.41059,-0.17726)$. 
			
			It follows from (Remark 1, \citet{rami2000linear}) that the mean-square stabilizability of the system $[A,C;B,D]$ implies the stabilizability of the pair $[A,B]$ in the deterministic sense. However, in this case, 
			$$
				\lambda_n(A+BK_{\alpha_0}^{*}) > 0.
			$$
			Therefore, policy $K_{\alpha_0}^{*}$ fails to stabilize the original linear system $[A,C;B,D]$.
		\end{remark}
	
		Based on this result, we subsequently present an explicit rule to identify the decrement $\Delta \alpha$ of discount factor $\alpha$. It follows from Lemma \ref{lem2.2} that the set $\mathcal{K}^{(\alpha)}$ shrinks as parameter $\alpha$ decrease, and this may result in an originally stabilizing feedback gain $K \in \mathcal{K}^{(\alpha)}$ no longer being an element of the contracted $\mathcal{K}^{(\alpha - \Delta \alpha)}$. To avoid this situation, the following lemma provides a selection guideline for $\Delta \alpha$.
		
		\begin{lem}\label{pro3.1}
			For any $K \in \mathcal{K}^{\left(\alpha\right)}$, $\alpha \geq 0$ and $\zeta > 1$, if the non-negative decrement $\Delta \alpha$ satisfies 
			\begin{equation}\label{3.1}
				\Delta \alpha \leq \frac{\lambda_{1}(\Sigma_0)\lambda_{1}(Q )}{2J_{\alpha}(K)} \frac{\zeta-1}{\zeta}, 
			\end{equation}
			then $$K \in \mathcal{K}^{\left( \alpha-\Delta \alpha \right)} \quad \text{and} \quad
			J_{\alpha-\Delta \alpha}(K) \leq \zeta J_\alpha (K).$$
		\end{lem}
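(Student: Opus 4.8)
The plan is to argue in two stages, using the shift identity $A_{\alpha-\Delta\alpha}=A_\alpha+\Delta\alpha I_n$ and the representation \eqref{2.5}--\eqref{2.6}. Since $K\in\mathcal K^{(\alpha)}$ and $Q+K^\top RK\succ 0$, Lemma \ref{lem2.1} gives a unique $P_\alpha\succ 0$ solving \eqref{2.6}, and \eqref{2.5} yields $J_\alpha(K)=\Tr(P_\alpha\Sigma_0)$. The one elementary estimate I will use repeatedly is that for any $P\succeq 0$,
$$\|P\|_2=\lambda_n(P)\le\Tr(P)\le\frac{1}{\lambda_1(\Sigma_0)}\Tr(P\Sigma_0),$$
where the last step follows from $\Sigma_0\succeq\lambda_1(\Sigma_0)I_n$ and $\Tr\big(P(\Sigma_0-\lambda_1(\Sigma_0)I_n)\big)\ge 0$. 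Applied to $P_\alpha$ this gives $\lambda_n(P_\alpha)\le J_\alpha(K)/\lambda_1(\Sigma_0)$.

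For the membership $K\in\mathcal K^{(\alpha-\Delta\alpha)}$, I would substitute $P_\alpha$ into the Lyapunov operator of $[A_{\alpha-\Delta\alpha},C;B,D]$ with gain $K$; using $A_{\alpha-\Delta\alpha}=A_\alpha+\Delta\alpha I_n$ and \eqref{2.6},
$$(A_{\alpha-\Delta\alpha}+BK)^\top P_\alpha+P_\alpha(A_{\alpha-\Delta\alpha}+BK)+(C+DK)^\top P_\alpha(C+DK)=-(Q+K^\top RK)+2\Delta\alpha P_\alpha.$$
It then suffices to check $2\Delta\alpha P_\alpha\prec Q+K^\top RK$. Bounding $2\Delta\alpha P_\alpha\preceq 2\Delta\alpha\lambda_n(P_\alpha)I_n$, the estimate above together with \eqref{3.1} and $\tfrac{\zeta-1}{\zeta}<1$ gives $2\Delta\alpha\lambda_n(P_\alpha)\le\lambda_1(Q)\tfrac{\zeta-1}{\zeta}<\lambda_1(Q)$, so $2\Delta\alpha P_\alpha\prec\lambda_1(Q)I_n\preceq Q\preceq Q+K^\top RK$. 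Hence the displayed left-hand side is negative definite, and Lemma \ref{lem2.1} yields $K\in\mathcal K^{(\alpha-\Delta\alpha)}$.

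For the cost bound, write $\alpha'=\alpha-\Delta\alpha$; by the first stage there is a unique $P_{\alpha'}\succ 0$ solving the Lyapunov equation for $[A_{\alpha'},C;B,D]$ with $J_{\alpha'}(K)=\Tr(P_{\alpha'}\Sigma_0)$. Subtracting \eqref{2.6} from that equation and again using $A_{\alpha'}=A_\alpha+\Delta\alpha I_n$, one finds that $\hat P:=P_{\alpha'}-P_\alpha$ solves the Lyapunov equation for $[A_\alpha,C;B,D]$ with gain $K$ and source term $2\Delta\alpha P_{\alpha'}\succeq 0$; since $K\in\mathcal K^{(\alpha)}$, Lemma \ref{lem2.1} applies, and, writing $\mathcal L$ for the linear, positivity-preserving solution map of this Lyapunov equation, $P_\alpha=\mathcal L(Q+K^\top RK)$ and $\hat P=2\Delta\alpha\,\mathcal L(P_{\alpha'})$. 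From $P_{\alpha'}\preceq\lambda_n(P_{\alpha'})I_n\preceq\tfrac{\lambda_n(P_{\alpha'})}{\lambda_1(Q)}(Q+K^\top RK)$ and monotonicity of $\mathcal L$, $\mathcal L(P_{\alpha'})\preceq\tfrac{\lambda_n(P_{\alpha'})}{\lambda_1(Q)}P_\alpha$; taking the trace against $\Sigma_0$ and invoking $\lambda_n(P_{\alpha'})\le J_{\alpha'}(K)/\lambda_1(\Sigma_0)$ gives
$$J_{\alpha'}(K)=J_\alpha(K)+2\Delta\alpha\Tr\big(\mathcal L(P_{\alpha'})\Sigma_0\big)\le J_\alpha(K)+\frac{2\Delta\alpha}{\lambda_1(\Sigma_0)\lambda_1(Q)}\,J_\alpha(K)\,J_{\alpha'}(K).$$
Setting $\beta:=\tfrac{2\Delta\alpha J_\alpha(K)}{\lambda_1(\Sigma_0)\lambda_1(Q)}$, hypothesis \eqref{3.1} gives $\beta\le\tfrac{\zeta-1}{\zeta}<1$, so $(1-\beta)J_{\alpha'}(K)\le J_\alpha(K)$ with $1-\beta\ge\tfrac1\zeta>0$, whence $J_{\alpha'}(K)\le\zeta J_\alpha(K)$.

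The main obstacle is the second stage: the natural bound on the perturbed value $J_{\alpha'}(K)$ is self-referential, since the estimate for $\lambda_n(P_{\alpha'})$ itself involves $J_{\alpha'}(K)$, and the argument must be closed by a Gr\"onwall-type rearrangement — this is precisely where the factor $\tfrac{\zeta-1}{\zeta}$ in \eqref{3.1} is needed, to force $1-\beta>0$. The other point that makes the monotonicity usable is to express $P_{\alpha'}$ through the solution operator of the \emph{unperturbed} system $[A_\alpha,C;B,D]$ rather than attempting to compare the two Lyapunov problems directly.
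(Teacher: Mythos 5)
Your proposal is correct and follows essentially the same route as the paper's proof: the same shift identity $A_{\alpha-\Delta\alpha}=A_\alpha+\Delta\alpha I_n$, the same sufficient condition $2\Delta\alpha P_\alpha\prec Q+K^\top RK$ for membership, the same difference of Lyapunov equations with source term $2\Delta\alpha P_{\alpha'}$, and the same self-referential inequality $J_{\alpha'}(K)-J_\alpha(K)\le\frac{2\Delta\alpha}{\lambda_1(\Sigma_0)\lambda_1(Q)}J_\alpha(K)J_{\alpha'}(K)$ closed by rearrangement. The only cosmetic difference is in the second stage, where the paper bounds $\Tr(2\Delta\alpha P_{\alpha'}Y_\alpha)$ via the dual Lyapunov variable $Y_\alpha$ and the trace duality of Lemma \ref{lemA.2}, whereas you bound the identical quantity via monotonicity and positivity of the primal solution operator $\mathcal L$ (which Lemma \ref{lem2.1} does license); both devices produce the same estimate, so the proofs coincide in substance.
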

		\begin{proof}
			We first show that for any $K \in \mathcal{K}^{\left(\alpha\right)}$ and a scalar $\zeta >1$ such that (\ref{3.1}) holds, $K \in \mathcal{K}^{\left(\alpha ^\prime\right)}$, where $\alpha^\prime := \alpha - \Delta \alpha$. To this end, we first notice that 
			\begin{equation*}
				\begin{aligned}
					& (A_{\alpha^\prime}+BK)^\top P_\alpha + P_\alpha (A_{\alpha^\prime}+BK) + (C+DK)^\top P_\alpha (C+DK) \\
					&\quad = (A_{\alpha}+BK)^\top P_\alpha + P_\alpha (A_{\alpha}+BK) \\
					&\quad \quad + (C+DK)^\top  P_\alpha(C+DK) + 2 \Delta \alpha P_\alpha \\
					&\quad = -(Q + K^\top RK) + 2\Delta \alpha P_\alpha,
				\end{aligned}
			\end{equation*}
			where $P_{\alpha} \in \mathbb{S}^{n}_{+}$ is the solution of the Lyapunov equation (\ref{2.6}). From this, when
			\begin{equation}\label{3.2}
				2\Delta \alpha P_\alpha \prec Q + K^\top RK,
			\end{equation}
			$P_{\alpha} \in \mathbb{S}^{n}_{+}$ also satisfies
			$$
			(A_{\alpha^\prime}+BK)^\top P_\alpha + P_\alpha (A_{\alpha^\prime}+BK) + (C+DK)^\top P_\alpha (C+DK) \prec 0,
			$$
			then it follows from Lemma \ref{lem2.1} that $K \in \mathcal{K}^{(\alpha^\prime)}$. Thus, in the following, we aim to find sufficient conditions under which the partial order (\ref{3.2}) holds.
			
			The following result is directly taken from Theorem 4.2.2 in (\citet{horn2012matrix}, Page 176). For symmetric positive definite matrices $P_\alpha$, $Q$ and $R$, it holds that
			\begin{equation}\label{3.3}
				P_\alpha \preceq \lambda_{n}(P_\alpha)I_n \quad \text{and} \quad \lambda_{1}(Q)I_n \preceq Q \preceq Q+K^\top RK.
			\end{equation} 
			In addition, from (\ref{2.5}), one has
			\begin{equation*}
				J_{\alpha}(K) = \Tr(P_\alpha\Sigma_0) \geq \lambda_{1}(\Sigma_0)\Tr(P_\alpha) > \lambda_{1}(\Sigma_0)\lambda_{n}(P_\alpha).
			\end{equation*}
			Here the first inequality is due to Lemma \ref{lemA.1}, the last inequality follows from the positive definiteness of matrix $P_\alpha$. Therefore, 
			\begin{equation}\label{3.4}
				\lambda_n(P_\alpha) < \frac{J_{\alpha}(K)}{\lambda_1(\Sigma_0)}.
			\end{equation}
			Combining (\ref{3.3}) and (\ref{3.4}), the partial order (\ref{3.2}) holds when 
			\begin{equation}\label{3.5}
				2 \Delta \alpha \frac{J_{\alpha}(K)}{\lambda_{1}(\Sigma_0)} \leq \lambda_{1}(Q).
			\end{equation}
			Solving inequality (\ref{3.5}), one gets
			\begin{equation}\label{3.6}
				\Delta \alpha \leq \frac{\lambda_{1}(\Sigma_0)\lambda_{1}(Q)}{2J_{\alpha}(K)}.
			\end{equation}
			The upper bound in (\ref{3.1}) ensures that (\ref{3.6}) holds. Thus, we can obtain that $K \in \mathcal{K}^{(\alpha^\prime)}$. Next we shall show that $J_{\alpha^\prime}(K) \leq \zeta J_\alpha (K)$.
			
			Let $Y_{\alpha}$ be the solution to 
			\begin{equation*}
				(A_{\alpha}+BK)Y_{\alpha} + Y_{\alpha}(A_{\alpha}+BK)^\top + (C+DK)Y_{\alpha}(C+DK)^\top + \Sigma_0 = 0.
			\end{equation*}
			It follows from Lemma \ref{lemA.2} and (\ref{2.5}) that
			\begin{equation*}
				J_{\alpha}(K) = \Tr(P_{\alpha}\Sigma_0) = \Tr[Y_{\alpha}(Q + K^\top R K)].
			\end{equation*}
			Further, by Lemma \ref{lemA.1} and Lemma \ref{lemA.3}, it hold that
			\begin{equation*}
				J_{\alpha}(K) \geq \lambda_{1}(Q + K^\top R K)\Tr(Y_{\alpha})\geq \lambda_{1}(Q )\Tr(Y_{\alpha}).
			\end{equation*}
			Hence, 
			\begin{equation}\label{3.9}
				\Tr(Y_{\alpha}) \leq \frac{J_{\alpha}(K)}{\lambda_{1}(Q)}
			\end{equation}
			
			Under the condition (\ref{3.1}), $K \in \mathcal{K}^{(\alpha^\prime)}$. Then we consider the following Lyapunov equation:			
			\begin{equation}\label{3.7}
				\begin{aligned}
					(A_{\alpha^\prime}&+BK)^\top P_{\alpha^\prime} + P_{\alpha^\prime}(A_{\alpha^\prime}+BK)\\
					& + (C+DK)^\top P_{\alpha^\prime}(C+DK) + Q + {K}^\top R K = 0.
				\end{aligned}
			\end{equation}
			Subtracting (\ref{2.6}) from (\ref{3.7}) yields
			\begin{equation*}
				\begin{aligned}
					& (A_{\alpha}+BK)^\top (P_{\alpha^\prime}-P_{\alpha}) + (P_{\alpha^\prime}-P_{\alpha})(A_{\alpha}+BK) \\
					& \quad + (C+DK)^\top (P_{\alpha^\prime}-P_{\alpha})(C+DK) + 2\Delta \alpha P_{\alpha^\prime} = 0.
				\end{aligned}
			\end{equation*}
			Combining with (\ref{2.5}), the cost difference satisfies 
			\begin{equation}\label{3.8}
				\begin{aligned}
					J_{\alpha^\prime}(K) - J_{\alpha}(K) & =\Tr\left[ (P_{\alpha^\prime}-P_{\alpha})\Sigma_0 \right]\\
					& =\Tr\left(2\Delta \alpha P_{\alpha^\prime} Y_{\alpha} \right) \\
					&\leq 2\Delta \alpha \lambda_{n}(P_{\alpha^\prime})\Tr(Y_{\alpha}),
				\end{aligned}
			\end{equation} 
			here the first equality follows from Lemma \ref{lemA.2}, and the last inequality is due to Lemma \ref{lemA.1}.
			
			Inserting (\ref{3.1}), (\ref{3.4}) and (\ref{3.9}) into (\ref{3.8}), one has
			$$
			J_{\alpha^\prime}(K) - J_\alpha(K) \leq \frac{\zeta -1}{\zeta} J_{\alpha^\prime}(K).
			$$
			Rearranging terms yields
			$$
			J_{\alpha^\prime}(K) \leq \zeta J_\alpha(K)
			$$
			which completes the proof.
		\end{proof}
		
		From Lemma \ref{pro3.1}, we observe that the cost $J_\alpha$ increases by a factor of $\zeta$ when the discount factor $\alpha$ is decreased by $\Delta \alpha$. Whereas, as indicated by (\ref{3.1}), the decrement $\Delta \alpha$ decreases monotonically with increasing cost $J_\alpha$. Consequently, the decrement $\Delta \alpha$ may gradually vanish with the iteration of parameter $\alpha$. To maintain a sufficient magnitude of $\Delta \alpha$, we opt to reduce the objective value $J_\alpha$ to its current optimum $J_\alpha^{*}$ by using the PI method at each $\alpha$-reduction step. 
		
		To sum it up, at the $j$-th iteration, the discount method performs the following two procedures:
		\begin{itemize} \label{item}
			\item[\phantomsection\label{item:step}a.]  Solve $K_{j+1}$ via PI method such that 
			$$
			K_{j+1} = \operatorname{arg}\min \limits_{K} J_{\alpha_j}(K); 
			$$ 
			\item[\phantomsection\label{item:step}b.]  Update $\alpha_{j+1} = \alpha_j - \Delta \alpha_j$, where
			$$
			\Delta \alpha_j =  \frac{\lambda_{1}(\Sigma_0)\lambda_{1}(Q )}{2J_{\alpha_j}(K_{j+1})} \frac{\zeta-1}{\zeta}.
			$$
		\end{itemize}
		
		The detailed implementation of the discount method with the knowledge of the dynamics matrices $(A,B,C,D)$ is provided in Algorithm \ref{alg:Model-based discount}.
		
		\begin{algorithm}[H]
			\caption{Stabilizing known linear time-variant systems via discount method} \label{alg:Model-based discount}
			
			\textbf{Input:} Initial discount factor $\alpha_0$, initial feedback gain $\mathbf{O}_{m \times n}$\\ 
			\textbf{Initialization}: Set $\alpha \leftarrow \alpha_0$ and  $K \leftarrow K_0$\\
			\While{$\alpha > 0$}{
				Set \( i = 0 \) and $K^{(0)} \leftarrow K$
				
				\Repeat{\( \|P^{(i+1)}_\alpha - P^{(i)}_\alpha\| < \epsilon \)}{				
					Solve $P^{(i+1)}_\alpha$ from Lyapunov equation
					\begin{equation}\label{3.10}
						\begin{aligned}
							(A_{\alpha}&+BK^{(i)})^\top P^{(i+1)}_\alpha + P^{(i+1)}_\alpha (A_{\alpha}+BK^{(i)})  \\
							&+ (C+DK^{(i)})^\top P^{(i+1)}_\alpha (C+DK^{(i)}) + Q + {K^{(i)}}^\top R K^{(i)} =0.
						\end{aligned}
					\end{equation}
					
					Update $K^{(i+1)}$ via 
					\begin{equation}\label{3.11}
						K^{(i+1)} = -(R+D^\top P^{(i+1)}_\alpha D)^{-1}(B^\top P^{(i+1)}_\alpha+D^\top P^{(i+1)}_\alpha C).
					\end{equation}
					
					\( i \leftarrow i + 1 \)
				}
				Set $K \leftarrow K^{(i+1)}$; 
				
				Solve $P_\alpha$ from Lyapunov equation (\ref{2.6}); 
				
				Set $\alpha \leftarrow  \alpha - \Delta \alpha$, where
				\begin{equation*}
					\Delta \alpha =  \frac{\lambda_{1}(\Sigma_0)\lambda_{1}(Q )}{2\Tr(P_\alpha \Sigma_0)} \frac{\zeta-1}{\zeta}.
				\end{equation*}
			}
		\end{algorithm}
		
		Finally, we prove that the Algorithm \ref{alg:Model-based discount} synthesizes a stabilizer of the system $[A,C;B,D]$ after finitely many iterations. We first present the convergence of the PI algorithm (\ref{3.10}) $\&$ (\ref{3.11}). It theoretically ensures the feasibility of step \hyperref[item:step]{a.}. 
		
		\begin{pro}\label{thm3.1}
			Given a fixed discount factor $\alpha \geq 0$, suppose $K^{(0)}$ is a stabilizer for the system $[A_\alpha,C;B,D]$. Then
			\begin{itemize}
				\item[\phantomsection\label{item:custom}(a).] All the policies $\{K^{(i)}\}_{i=1}^{\infty}$ updated by (\ref{3.11}) are stabilizers.
				\item[\phantomsection\label{item:custom}(b).] There exists a unique solution $P^{(i+1)}_\alpha \in \mathbb{S}^{n}_{+}$ to (\ref{3.10}) at each step. 
				\item[\phantomsection\label{item:custom}(c).] The iteration $\{P^{(i+1)}_\alpha\}_{i=1}^{\infty}$ converges to the unique solution $P^{*}_\alpha \in \mathbb{S}^{n}_{+}$ of the following algebraic Riccati equation (ARE):
				\begin{equation}\label{3.12}
					\begin{aligned}
						A_\alpha^\top P^{*}_\alpha&+P^{*}_\alpha A_\alpha^\top+C^\top P^{*}_\alpha C +Q \\
						&-(P^{*}_\alpha B+C^\top P^{*}_\alpha D)(R+D^\top P^{*}_\alpha D)^{-1}(B^\top P^{*}_\alpha + D^\top P^{*}_\alpha C)=0.
					\end{aligned}
				\end{equation}
			\end{itemize}	
		\end{pro}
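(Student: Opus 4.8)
The plan is to establish (a) and (b) simultaneously by induction on $i$, and then to deduce (c) from a monotone-convergence argument in the partial order $\succeq$, identifying the limit with the solution of the ARE (\ref{3.12}) by continuity and Lemma \ref{lem2.1}. For the induction, the base case is the standing hypothesis that $K^{(0)}$ stabilizes $[A_\alpha,C;B,D]$. Suppose $K^{(i)}$ is a stabilizer. Since $Q\succ 0$ and $R\succ 0$, the matrix $Q+(K^{(i)})^\top R K^{(i)}$ belongs to $\mathbb{S}_{+}^{n}$, so Lemma \ref{lem2.1} applied to $K^{(i)}$ furnishes a unique $P^{(i+1)}_\alpha\in\mathbb{S}_{+}^{n}$ solving (\ref{3.10}); this is (b). With $\mathcal{R}:=R+D^\top P^{(i+1)}_\alpha D\succ 0$, the update (\ref{3.11}) reads $K^{(i+1)}=-\mathcal{R}^{-1}(B^\top P^{(i+1)}_\alpha+D^\top P^{(i+1)}_\alpha C)$, and a completing-the-square computation (expanding the closed-loop operator for $K^{(i+1)}$, subtracting (\ref{3.10}), and using the formula for $K^{(i+1)}$) gives the identity
\begin{equation*}
\begin{aligned}
&(A_\alpha+BK^{(i+1)})^\top P^{(i+1)}_\alpha+P^{(i+1)}_\alpha(A_\alpha+BK^{(i+1)})+(C+DK^{(i+1)})^\top P^{(i+1)}_\alpha(C+DK^{(i+1)})\\
&\qquad=-Q-(K^{(i+1)})^\top R K^{(i+1)}-(K^{(i)}-K^{(i+1)})^\top\mathcal{R}\,(K^{(i)}-K^{(i+1)})\ \preceq\ -Q\ \prec\ 0.
\end{aligned}
\end{equation*}
Because $P^{(i+1)}_\alpha\in\mathbb{S}_{+}^{n}$, Lemma \ref{lem2.1} shows $K^{(i+1)}$ is a stabilizer of $[A_\alpha,C;B,D]$, which closes the induction and proves (a).

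Next I would prove that $\{P^{(i)}_\alpha\}_{i\geq 1}$ is non-increasing. For $i\geq 1$, the policy $K^{(i)}$ is the greedy update associated with $P^{(i)}_\alpha$, so the same completing-the-square identity lets one rewrite (\ref{3.10}) for $P^{(i)}_\alpha$ in terms of $K^{(i)}$; subtracting it from (\ref{3.10}) for $P^{(i+1)}_\alpha$ and setting $\Delta P:=P^{(i)}_\alpha-P^{(i+1)}_\alpha$ yields
\begin{equation*}
(A_\alpha+BK^{(i)})^\top\Delta P+\Delta P(A_\alpha+BK^{(i)})+(C+DK^{(i)})^\top\Delta P(C+DK^{(i)})+\Lambda=0,
\end{equation*}
with $\Lambda=(K^{(i-1)}-K^{(i)})^\top(R+D^\top P^{(i)}_\alpha D)(K^{(i-1)}-K^{(i)})\in\overline{\mathbb{S}_{+}^{n}}$. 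Since $K^{(i)}$ is a stabilizer by (a), the uniqueness part of Lemma \ref{lem2.1} forces $\Delta P\in\overline{\mathbb{S}_{+}^{n}}$, i.e. $P^{(i+1)}_\alpha\preceq P^{(i)}_\alpha$. Hence $\{P^{(i)}_\alpha\}_{i\geq 1}$ is non-increasing, bounded above by $P^{(1)}_\alpha$ and below by $0$, so it converges to some $P^*_\alpha\in\overline{\mathbb{S}_{+}^{n}}$.

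To finish (c) I would show $P^*_\alpha\in\mathbb{S}_{+}^{n}$ and identify it with the ARE solution. The bound $0\preceq P^{(i)}_\alpha\preceq P^{(1)}_\alpha$ together with (\ref{3.11}) and $R+D^\top P^{(i)}_\alpha D\succeq R$ makes $\{K^{(i)}\}$ bounded; combining a standard representation of the closed-loop Lyapunov solution with a Gr\"onwall lower bound for the closed-loop second moment then gives a constant $c>0$, independent of $i$, with $P^{(i)}_\alpha\succeq cI_n$, so $P^*_\alpha\succeq cI_n$ and $P^*_\alpha\in\mathbb{S}_{+}^{n}$. By continuity (using $R+D^\top P^*_\alpha D\succeq R\succ 0$), $K^{(i)}\to K^*:=-(R+D^\top P^*_\alpha D)^{-1}(B^\top P^*_\alpha+D^\top P^*_\alpha C)$; passing to the limit in (\ref{3.10}) shows $P^*_\alpha$ satisfies the Lyapunov equation $(A_\alpha+BK^*)^\top P^*_\alpha+P^*_\alpha(A_\alpha+BK^*)+(C+DK^*)^\top P^*_\alpha(C+DK^*)+Q+(K^*)^\top RK^*=0$, and inserting the formula for $K^*$ collapses this to (\ref{3.12}). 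Since $P^*_\alpha\succ 0$ solves the above Lyapunov equation with a negative-definite right-hand side, Lemma \ref{lem2.1} also gives that $K^*$ stabilizes $[A_\alpha,C;B,D]$, so $P^*_\alpha$ is the stabilizing solution; uniqueness of the positive-definite (equivalently, stabilizing) solution of (\ref{3.12}) then follows from the solvability theory of the stochastic ARE in \citet{rami2000linear,li2022stochastic,sun2020stochastic}.

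I expect the induction and monotonicity steps to be routine Kleinman-type bookkeeping. The main obstacle is the first half of the last paragraph: ruling out degeneracy of the monotone limit, i.e. proving $P^*_\alpha\succ 0$ rather than merely $P^*_\alpha\succeq 0$, which is what lets Lemma \ref{lem2.1} certify $K^*$ as a stabilizer and pins down the limit as the ARE solution. This requires the uniform positive lower bound on $\{P^{(i)}_\alpha\}$ --- which in turn relies on boundedness of the policy iterates --- or, alternatively, a direct appeal to the known well-posedness of the stochastic algebraic Riccati equation.
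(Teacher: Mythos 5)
Your proof is correct and follows the standard Kleinman-type policy-iteration argument (induction on $i$ via completing the square, monotone decrease of $P^{(i)}_\alpha$ under the partial order, and identification of the monotone limit with the ARE solution), which is precisely the argument of Theorems 2.1--2.2 in \citet{li2022stochastic} that the paper's own ``proof'' merely cites rather than reproduces. The one step you flag as delicate --- the uniform bound $P^{(i)}_\alpha \succeq c I_n$ guaranteeing $P^{*}_\alpha \succ 0$ so that Lemma \ref{lem2.1} certifies $K^{*}$ --- is handled the same way in the cited reference, so there is no gap relative to what the paper itself relies on.
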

	
		\begin{proof}
			 The proof is reminiscent of Theorem 2.1-2.2 in (\citet{li2022stochastic}, Page 5013). The difference is that in \citep{li2022stochastic} the convergence of the PI algorithm was established for infinite-horizon SLQ problems with the deterministic initial state while here we study SLQ optimal control problems with the random initial state. Notably, the proof in \citep{li2022stochastic} depends entirely on the equivalent conditions in verifying the stabilizability (Lemma \ref{lem2.1}) which is independent of initial state. Therefore, the convergence result can be extended to this paper without requiring additional analysis.
		\end{proof}
		
		\begin{thm}\label{thm3.2}
			Let the initial discount factor $\alpha_0$ in Algorithm \ref{alg:Model-based discount} satisfy
			$$
				\alpha_0 > \frac{1}{2}\left(\lambda_{n}(A+A^\top) + \|C\|_2^2\:\right).
			$$
			Then Algorithm \ref{alg:Model-based discount} terminates after at most $\lceil \alpha_0/\tilde{\alpha} \rceil$ iterations and returns a stabilizing feedback gain for system $[A,C;B,D]$. Here, the constant $\tilde{\alpha}$ is 
			$$
				\tilde{\alpha} := \frac{\lambda_{1}(\Sigma_0)\lambda_{1}(Q )}{2J^{*}} \frac{\zeta-1}{\zeta},
			$$
			$\lceil \cdot \rceil$ denotes ceiling function that maps a real number to the smallest integer greater than or equal to this real number, and $J^{*}$ denotes the optimal value of the undiscounted SLQ problem.
		\end{thm}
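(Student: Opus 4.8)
The plan is to establish three things in turn: that Algorithm~\ref{alg:Model-based discount} is well-defined (every call to the PI subroutine is warm-started at a genuine stabilizer), that the outer-loop decrement is uniformly bounded below by $\tilde\alpha>0$, and that the gain returned upon termination stabilizes $[A,C;B,D]$. The engine of the argument is Lemma~\ref{pro3.1}, which is invoked once per outer iteration and simultaneously yields feasibility and a lower bound on the decrement.

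For well-definedness I would argue by induction on the outer index $j$ that at the beginning of iteration $j$ one has $\alpha_j\ge 0$ and $K_j\in\mathcal{K}^{(\alpha_j)}$. The base case is exactly Lemma~\ref{lem3.1}: the hypothesis on $\alpha_0$ forces $K_0=\mathbf{O}_{m\times n}\in\mathcal{K}^{(\alpha_0)}$. For the inductive step, since $K_j\in\mathcal{K}^{(\alpha_j)}$, Proposition~\ref{thm3.1} applies with $\alpha=\alpha_j$: the inner PI loop is well-posed, all its iterates remain stabilizers, and, run to convergence, it produces the optimal policy of the discounted SLQ problem with factor $\alpha_j$, so the updated gain $K_{j+1}$ satisfies $K_{j+1}\in\mathcal{K}^{(\alpha_j)}$ and $J_{\alpha_j}(K_{j+1})=J^{*}_{\alpha_j}$ (using (\ref{2.5})). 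The algorithm then sets $\alpha_{j+1}=\alpha_j-\Delta\alpha_j$ with
$$
\Delta\alpha_j=\frac{\lambda_1(\Sigma_0)\lambda_1(Q)}{2\,J_{\alpha_j}(K_{j+1})}\,\frac{\zeta-1}{\zeta},
$$
which is precisely the largest value permitted by (\ref{3.1}); hence Lemma~\ref{pro3.1} gives $K_{j+1}\in\mathcal{K}^{(\alpha_{j+1})}$, closing the induction. (If $\alpha_{j+1}\le 0$ the loop exits instead; note that the characterization of Lemma~\ref{lem2.1}, and therefore the arguments of Lemmas~\ref{lem2.2} and \ref{pro3.1}, never require $\alpha-\Delta\alpha\ge 0$, only $\Delta\alpha\ge 0$, so a negative shifted factor causes no difficulty.)

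For the uniform lower bound, the exact PI output gives $J_{\alpha_j}(K_{j+1})=J^{*}_{\alpha_j}$, and since $\alpha_j\ge 0$, Lemma~\ref{lem2.3} with $\alpha_1=0\le\alpha_2=\alpha_j$ yields $J^{*}_{\alpha_j}\le J^{*}_0=J^{*}$. Here $J^{*}=\min_{K\in\mathcal{K}}J_0(K)$ is finite because $\mathcal{K}\neq\emptyset$, and strictly positive because $\Sigma_0\succ 0$ and $Q\succ 0$ force $P_0\succ 0$, hence $\Tr(P_0\Sigma_0)>0$, for every $K\in\mathcal{K}$; the same reasoning gives $J^{*}_{\alpha_j}>0$, so all quantities are well posed and $0<J^{*}_{\alpha_j}\le J^{*}<\infty$. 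Consequently
$$
\Delta\alpha_j=\frac{\lambda_1(\Sigma_0)\lambda_1(Q)}{2\,J^{*}_{\alpha_j}}\,\frac{\zeta-1}{\zeta}\ \ge\ \frac{\lambda_1(\Sigma_0)\lambda_1(Q)}{2\,J^{*}}\,\frac{\zeta-1}{\zeta}=\tilde\alpha>0
$$
for every $j$, so $\alpha_j\le\alpha_0-j\tilde\alpha$; after at most $\lceil\alpha_0/\tilde\alpha\rceil$ iterations $\alpha$ reaches $0$ or less and the while-loop terminates. Finally, since the proof of Lemma~\ref{lem2.2} extends verbatim to arbitrary reals $\alpha_1\le\alpha_2$ (add $-2(\alpha_2-\alpha_1)P_{\alpha_1}\preceq 0$ to the negative-definite Lyapunov expression), we obtain $\mathcal{K}^{(\alpha)}\subseteq\mathcal{K}^{(0)}=\mathcal{K}$ for every $\alpha\le 0$; by the induction above the returned gain lies in $\mathcal{K}^{(\alpha_{\mathrm{final}})}$ with $\alpha_{\mathrm{final}}\le 0$, hence in $\mathcal{K}$, i.e.\ it stabilizes $[A,C;B,D]$.

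I expect the main obstacle to be the bookkeeping in the first part: one must track carefully that the warm start handed from iteration $j$ to iteration $j+1$ is exactly the policy $K_{j+1}$ whose cost enters the formula for $\Delta\alpha_j$, so that the single application of Lemma~\ref{pro3.1} delivers both feasibility ($K_{j+1}\in\mathcal{K}^{(\alpha_{j+1})}$) and the decrement bound. A secondary point worth addressing is that the analysis above treats PI as run to exact convergence; with the tolerance $\epsilon$ in the inner loop one obtains $K_{j+1}$ only close to the optimal policy, so a short continuity remark is needed to conclude that $K_{j+1}$ still lies in $\mathcal{K}^{(\alpha_{j+1})}$ and that the iteration count is essentially unchanged.
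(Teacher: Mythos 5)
Your proof is correct and follows essentially the same route as the paper's: Lemma \ref{lem3.1} for the base case, Proposition \ref{thm3.1} for the inner PI loop, Lemma \ref{pro3.1} for feasibility after each decrement, and Lemma \ref{lem2.3} for the uniform lower bound $\tilde{\alpha}$ on $\Delta\alpha_j$. The extra care you take — making the outer induction explicit, extending the monotonicity of $\mathcal{K}^{(\alpha)}$ to a final $\alpha\le 0$, and flagging that exact PI convergence is being assumed despite the tolerance $\epsilon$ — addresses points the paper leaves implicit, but does not change the argument.
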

		
		\begin{proof}
			The convergence of the PI method (\ref{3.10}) $\&$ (\ref{3.11}) established in Proposition \ref{thm3.1} guarantees that the decrement $\Delta \alpha_j$ in step \hyperref[item:step]{b.} satisfies
			$$
				\Delta \alpha_j = \frac{\lambda_{1}(\Sigma_0)\lambda_{1}(Q)}{2J_\alpha^{*}} \frac{\zeta-1}{\zeta}  \qquad \forall j \geq 1.
			$$
			Then Lemma \ref{lem2.3} implies that
			$$
				\Delta \alpha_j \geq \frac{\lambda_{1}(\Sigma_0)\lambda_{1}(Q )}{2J^{*}} \frac{\zeta-1}{\zeta} =: \tilde{\alpha}  \qquad \forall j \geq 1.
			$$
			Since $\Delta \alpha_j$ has a uniform lower bound $\tilde{\alpha}$, the discount method in Algorithm \ref{alg:Model-based discount} terminates after at most $\lceil \alpha_0/\tilde{\alpha} \rceil$ iterations.
			
			By Proposition \ref{thm3.1} \hyperref[item:custom]{(a).}, if $K^{(0)}$ is the stabilizer of the system $[A_{\alpha},C;B,D]$, the output of PI methods, $K^{(i+1)}$, is also the stabilizer of the system $[A_{\alpha},C;B,D]$. Further, because the decrement $\Delta \alpha$ in Algorithm \ref{alg:Model-based discount} satisfies (\ref{3.1}), it follows from Lemma \ref{pro3.1} that $K^{(i+1)}$ can stabilize the system $[A_{\alpha-\Delta \alpha},C;B,D]$. Significantly, this result holds true for each iteration. By Lemma \ref{lem3.1}, initial input $\mathbf{O}_{m \times n}$ is the stabilizer of the system $[A_{\alpha_0},C;B,D]$. Hence, the parameters in Algorithm \ref{alg:Model-based discount} ensure that the final output policy stabilizes the original stochastic LTI system.		
		\end{proof}
		
		\begin{remark}\label{remark2}
			Significantly, Proposition \ref{thm3.1} is derived solely through Lemma \ref{lem2.1}. And Lemma \ref{pro3.1} verifies that an originally stabilizing feedback gain $K \in \mathcal{K}^{(\alpha)}$ belongs to the contracted set $\mathcal{K}^{(\alpha - \Delta \alpha)}$ exclusively relying on Lemma \ref{lem2.1}. Further, Lemma \ref{lem2.1} implies that whether policy $K$ can stabilize stochastic LTI systems depends solely on system coefficient matrices, that is, the stabilizability of these systems is independent of their corresponding initial state. Thus stabilizers of the system (\ref{2.1}) derived from Algorithm \ref{alg:Model-based discount} can also stabilize other time-invariant stochastic linear dynamical control systems with same coefficient matrices and the deterministic initial state.
			
			Specially, we set the distribution of initial state as the standard normal distribution. Now, $\Sigma_0 = I_n$ and the decrement $\Delta \alpha$ in Algorithm \ref{alg:Model-based discount} is 
			$$
			\Delta \alpha =  \frac{\lambda_{1}(Q )}{2\Tr(P_\alpha)} \frac{\zeta-1}{\zeta}.
			$$
			Then Algorithm \ref{alg:Model-based discount} can synthesize stabilizers for stochastic LTI systems with the deterministic initial state, using only the coefficient matrices in state dynamics and cost functional.
		\end{remark}

		\noindent
		\textbf{B. Unknown model}
		
		From Algorithm \ref{alg:Model-based discount}, the model-free discount method can be established provided that solving the Lyapunov equation and updating policies can be implemented directly along the state and control trajectories. 
		
		To this end, we adopt adaptive dynamic programming (ADP) algorithm (\citet{werbos1974beyond}). This algorithm has been widely applied to solve optimal control problems in the model-free setting, such as continuous-time deterministic linear-quadratic control problems (\citet{jiang2012computational}), discrete-time SLQ optimal control problems (\citet{WANG2016379Infinite}), continuous-time SLQ problems with the deterministic initial state (\citet{zhang2023adaptive,zhang2024data}), among others.

		Inspired by these works, particularly (\citet{zhang2023adaptive,zhang2024data}), we now describe how to execute the policy evaluation step, (\ref{3.10}), and the policy improvement step, (\ref{3.11}), without requiring the knowledge of system matrices, thereby developing the model-free discount method. 		
		
		For completeness, we first restate Lemma 2 in (\citet{zhang2023adaptive}) due to its foundational role in constructing the ADP-based model-free PI algorithm. Based on this lemma, the system matrices $(A,B,C,D)$ required in (\ref{3.10}) and (\ref{3.11}) can be replaced by the observed state and input information.
		
		\begin{lem}\label{lem3.2}
			For any $i \geq 0$ and $\alpha \geq 0$, the solution $P^{(i+1)}_\alpha$ of (\ref{3.10}) and the policy $K^{(i+1)}$ updated by (\ref{3.11}) satisfies 
			\begin{equation}\label{3.13}
				\begin{aligned}
					&\mathbb{E} \big[\widetilde{X}(t+\Delta t)^\top P^{(i+1)}_\alpha \widetilde{X}(t+\Delta t)\big] - \mathbb{E}[\widetilde{X}(t)^\top P^{(i+1)}_\alpha \widetilde{X}(t)] \\
					& + 2\mathbb{E} \int_{t}^{t+\Delta t} \big( \widetilde{u}(s)- {K^{(i)}}\widetilde{X}(s) \big)^\top  M^{(i+1)}_\alpha \widetilde{X}(s) ds \\
					&- \mathbb{E} \int_{t}^{t+\Delta t}  \big( \widetilde{u}(s)- {K^{(i)}}\widetilde{X}(s) \big)^\top H^{(i+1)}_\alpha \big( \widetilde{u}(s)+ {K^{(i)}}\widetilde{X}(s) \big) ds  \\
					&\quad = -\mathbb{E} \int_{t}^{t+\Delta t}\widetilde{X}(s)^\top \big(Q + {K^{(i)}}^\top RK^{(i)} \big) \widetilde{X}(s) ds,
				\end{aligned}	
			\end{equation}
			where $0 \leq t < t+\Delta t < \infty$, $M^{(i+1)}_\alpha:= (R+D^\top P^{(i+1)}_\alpha D)K^{(i+1)} $; $H^{(i+1)}_\alpha := D^\top P^{(i+1)}_\alpha D$, and $\widetilde{X}(\cdot)$ is the solution of system (\ref{2.3}) with arbitrary admissible control $\widetilde{u}(\cdot)$.
		\end{lem}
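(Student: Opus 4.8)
The plan is to apply It\^o's formula to the scalar process $s\mapsto \widetilde{X}(s)^\top P^{(i+1)}_\alpha\widetilde{X}(s)$ along the dynamics (\ref{2.3}) driven by an \emph{arbitrary} admissible control $\widetilde{u}(\cdot)$ --- crucially \emph{not} the feedback $K^{(i)}\widetilde{X}(\cdot)$, which is exactly what makes the resulting identity usable off-policy. Writing $P:=P^{(i+1)}_\alpha$ and $K:=K^{(i)}$ for brevity, and suppressing the argument $s$ on the right, It\^o's formula gives
\begin{equation*}
	\begin{aligned}
		d\big[\widetilde{X}^\top P\,\widetilde{X}\big]
		&=\Big[\widetilde{X}^\top\big(A_\alpha^\top P+PA_\alpha+C^\top PC\big)\widetilde{X}
		+2\widetilde{X}^\top\big(PB+C^\top PD\big)\widetilde{u}+\widetilde{u}^\top D^\top PD\,\widetilde{u}\Big]ds\\
		&\quad+2\widetilde{X}^\top P\big(C\widetilde{X}+D\widetilde{u}\big)\,dW.
	\end{aligned}
\end{equation*}
Integrating over $[t,t+\Delta t]$ and taking expectations, the $dW$-integral is a martingale on this compact interval (all moments of $\widetilde{X}$ are finite there because $\widetilde{u}$ is admissible), hence has zero mean; this produces a first identity for $\mathbb{E}[\widetilde{X}(t+\Delta t)^\top P\widetilde{X}(t+\Delta t)]-\mathbb{E}[\widetilde{X}(t)^\top P\widetilde{X}(t)]$ in terms of the integral of the drift above.

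Next I would use the policy-evaluation equation (\ref{3.10}) to eliminate the bare Lyapunov operator, rewriting it as
\begin{equation*}
	A_\alpha^\top P+PA_\alpha+C^\top PC=-\big(Q+K^\top RK\big)-\big(K^\top B^\top P+PBK\big)-\big(K^\top D^\top PC+C^\top PDK\big)-K^\top D^\top PDK.
\end{equation*}
Substituting this into the drift integrand isolates the target term $-\widetilde{X}^\top(Q+K^\top RK)\widetilde{X}$, and what remains is a purely algebraic identity. Using the policy-improvement formula (\ref{3.11}), note that $M^{(i+1)}_\alpha=(R+D^\top PD)K^{(i+1)}=-(B^\top P+D^\top PC)$ and $H^{(i+1)}_\alpha=D^\top PD$; then expanding $-2(\widetilde{u}-K\widetilde{X})^\top M^{(i+1)}_\alpha\widetilde{X}+(\widetilde{u}-K\widetilde{X})^\top H^{(i+1)}_\alpha(\widetilde{u}+K\widetilde{X})$ and symmetrizing each scalar quadratic form, the $H^{(i+1)}_\alpha$-cross terms cancel and one is left precisely with
\begin{equation*}
	2\widetilde{X}^\top(PB+C^\top PD)\widetilde{u}+\widetilde{u}^\top D^\top PD\,\widetilde{u}-\widetilde{X}^\top\big(K^\top B^\top P+PBK+K^\top D^\top PC+C^\top PDK+K^\top D^\top PDK\big)\widetilde{X},
\end{equation*}
which matches the surviving drift terms term by term. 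Integrating over $[t,t+\Delta t]$ and taking expectations then yields (\ref{3.13}).

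I expect the difficulty to be bookkeeping rather than analysis. The one point requiring care is that $M^{(i+1)}_\alpha$ is not symmetric, so the scalar forms $\widetilde{X}^\top K^\top M^{(i+1)}_\alpha\widetilde{X}$ (and the mixed $\widetilde{X}^\top K^\top H^{(i+1)}_\alpha\widetilde{u}$ term) must be symmetrized correctly for the cross-terms to cancel and the signs to line up; once the identity (\ref{3.11}) defining $K^{(i+1)}$ is invoked, everything collapses. It is also worth emphasising that no step of the argument assumes $\widetilde{u}=K^{(i)}\widetilde{X}$, so (\ref{3.13}) holds for every admissible control, which is exactly the property the model-free PI scheme exploits. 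Finally, well-posedness of (\ref{3.10})--(\ref{3.11}) is guaranteed by Proposition \ref{thm3.1}, and (\ref{2.3}) is well posed on finite horizons for admissible $\widetilde{u}$, so no additional regularity input is needed.
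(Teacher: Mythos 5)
Your proof is correct. The paper itself gives no proof of this lemma --- it simply restates Lemma 2 of the cited reference (\citet{zhang2023adaptive}) --- and your argument is the standard derivation that reference uses: It\^o's formula applied to $\widetilde{X}^\top P^{(i+1)}_\alpha\widetilde{X}$ along (\ref{2.3}) with an arbitrary admissible input, substitution of the Lyapunov identity (\ref{3.10}) to produce the $-(Q+{K^{(i)}}^\top RK^{(i)})$ term, and the relation $M^{(i+1)}_\alpha=-(B^\top P^{(i+1)}_\alpha+D^\top P^{(i+1)}_\alpha C)$ from (\ref{3.11}) to absorb the remaining cross terms. I checked the algebra: the $M^{(i+1)}_\alpha$-terms cancel exactly as you describe (using that $\widetilde{X}^\top M^\top K\widetilde{X}=\widetilde{X}^\top K^\top M\widetilde{X}$ as scalars), and the symmetry of $H^{(i+1)}_\alpha$ kills the $\widetilde{u}$--$K^{(i)}\widetilde{X}$ cross terms in the difference-of-squares factor, so the identity (\ref{3.13}) follows after integrating and discarding the mean-zero stochastic integral.
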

		
		\begin{remark}
			Obviously, the equality (\ref{3.13}) holds for any admissible control $\widetilde{u}(\cdot)$ and its corresponding state $\widetilde{X}(\cdot)$. As a result, the ADP-based model-free PI algorithm that is building upon Lemma \ref{lem3.2} is an off-policy algorithm.
			
			Consistent with \citet{jiang2012computational}, we employ $\widetilde{u}(\cdot) = K^{(0)}\widetilde{X}(\cdot) + e(\cdot)$, where the exploration noise $e(\cdot)$ is the sum of sinusoidal signals with different frequencies. Note that this control law is limited to the implementation of the ADP-based model-free PI algorithm, we just consider the linear state-feedback control elsewhere in this paper.
		\end{remark}	
		  
		With those notations stated in Section \ref{sec1}, for any $V \in \mathbb{S}^{n}$, we define an operator $\Vech{(V)} \in \mathbb{R}^{\frac{1}{2}n(n+1)}$  as
		$$
		\Vech{(V)} :=  \big[v_{11},2v_{12},\cdots,2v_{1n},v_{22},2v_{23},\cdots,2v_{2n},\cdots,2v_{n-1,n},v_{nn}\big]^\top.
		$$ 
		From \citet{murray2002adaptive}, there exists a matrix $\Gamma \in \mathbb{R}^{n^2\times{\frac{1}{2}n(n+1)}}$ mapping $\Vech(V)$ to $\Vect(V)$, i.e. $\Gamma \Vech(V)=\Vect(V)$. For any $\nu \in \mathbb{R}^n$, one has
		$$
		\nu^\top V \nu= (\nu^\top \otimes \nu^\top)\Vect(V) = (\nu^\top \otimes \nu^\top)\Gamma\Vech(V) =: \mathcal{M}(\nu)^\top\Vech(V).
		$$
		Then by applying vectorization methods and Kronecker product theory, the term in (\ref{3.13}) can be rewritten as
		\begin{equation}\label{3.13a}
			\begin{aligned}
				&\mathbb{E}\left[\mathcal{M}(\widetilde{X}(t+\Delta t)) - \mathcal{M}(\widetilde{X}(t))\right]^\top \Vech(P^{(i+1)}_\alpha)  \\
				&+ 2\mathbb{E} \int_{t}^{t+\Delta t} (\widetilde{X}(s)^\top \otimes \widetilde{u}(s)^\top ) \\
				&\quad - (\widetilde{X}(s)^\top \otimes \widetilde{X}(s)^\top ) (I_n \otimes {K^{(i)}}^\top ) ds \Vect(M^{(i+1)}_\alpha) \\
				& - \mathbb{E} \int_{t}^{t+\Delta t} \mathcal{M}(\widetilde{u}(s))^\top - \mathcal{M}(K^{(i)}\widetilde{X}(s))^\top ds \Vech(H^{(i+1)}_\alpha) \\
				& \quad = -\mathbb{E} \int_{t}^{t+\Delta t}\widetilde{X}(s)^\top \big(Q + {K^{(i)}}^\top RK^{(i)} \big) \widetilde{X}(s) ds.
			\end{aligned}
		\end{equation}
		
		Further, we define
		\begin{equation*}
			\begin{aligned}
				&\mathbf{\Xi} = \mathbb{E} \left[\mathcal{M}(\widetilde{X}_1(t_0)) - \mathcal{M}(\widetilde{X}_1(0)), \mathcal{M}(\widetilde{X}_2(t_0)) - \mathcal{M}(\widetilde{X}_2(0)),  \right. \\
				& \qquad \left. \cdots, \mathcal{M}(\widetilde{X}_{l-1}(t_0)) - \mathcal{M}(\widetilde{X}_{l-1}(0)), \mathcal{M}(\widetilde{X}_l(t_0)) - \mathcal{M}(\widetilde{X}_l(0)) \right] ^\top,
			\end{aligned}
		\end{equation*}
		\begin{equation*}
			\mathbb{I}_{\mathbf{xu}} = \mathbb{E} \left[ \int_{0}^{t_0} \widetilde{X}_1(s) \otimes \widetilde{u}_1(s) ds, \int_{0}^{t_0} \widetilde{X}_2(s) \otimes \widetilde{u}_2(s) ds, \cdots , \int_{0}^{t_0}\widetilde{X}_l(s) \otimes \widetilde{u}_l(s) ds \right]^\top,
		\end{equation*}
		\begin{equation*}
			\mathbb{I}_{\mathbf{xx}} = \mathbb{E} \left[ \int_{0}^{t_0} \widetilde{X}_1(s) \otimes \widetilde{X}_1(s) ds, \int_{0}^{t_0} \widetilde{X}_2(s) \otimes \widetilde{X}_2(s) ds, \cdots , \int_{0}^{t_0}\widetilde{X}_l(s) \otimes \widetilde{X}_l(s) ds \right]^\top,
		\end{equation*}
		\begin{equation*}
			\mathbb{M}_{\mathbf{u}} = \mathbb{E} \left[ \int_{0}^{t_0} \mathcal{M}(\widetilde{u}_1(s)) ds, \int_{0}^{t_0} \mathcal{M}(\widetilde{u}_2(s)) ds, \cdots , \int_{0}^{t_0} \mathcal{M}(\widetilde{u}_l(s)) ds \right]^\top,
		\end{equation*}
		\begin{equation*}
			\mathbb{M}^{(i)}_{\mathbf{kx}} = \mathbb{E} \left[ \int_{0}^{t_0} \mathcal{M}(K^{(i)}\widetilde{X}_1(s)) ds, \int_{0}^{t_0} \mathcal{M}(K^{(i)}\widetilde{X}_2(s)) ds, \cdots , \int_{0}^{t_0} \mathcal{M}(K^{(i)}\widetilde{X}_l(s))ds \right]^\top,
		\end{equation*}
		\begin{equation*}
			\begin{aligned}
				&\mathbb{J}^{(i)}_{\mathbf{k}} = -\mathbb{E} \left[\int_{0}^{t_0} \widetilde{X}_1(s)^\top (Q + {K^{(i)}}^\top R K^{(i)}) \widetilde{X}_1(s) ds, \int_{0}^{t_0} \widetilde{X}_2(s)^\top (Q + {K^{(i)}}^\top R K^{(i)}) \widetilde{X}_2(s) ds,  \right. \\
				& \qquad \qquad \left. \cdots, \int_{0}^{t_0} \widetilde{X}_l(s)^\top (Q + {K^{(i)}}^\top R K^{(i)}) \widetilde{X}_l(s) ds \right] ^\top,
			\end{aligned}
		\end{equation*}
		where $t_0 > 0$ is an arbitrary time point, $X_h(\cdot) \equiv X_h(\cdot\:; 0, x_h, u_h(\cdot)) \: (1\leq h \leq l)$ denotes the state trajectory with different initial state $x_h \in \mathcal{H}$. 
		
		Then, for any given stabilizing policy $K^{(i)}$ ($i \geq 0$) , (\ref{3.13a}) implies that 	
		\begin{equation*}
			\mathbf{\Phi}_i
			\begin{bmatrix}
				\Vech(P^{(i+1)}_\alpha) \\
				\Vect(M^{(i+1)}_\alpha)\\
				\Vech(H^{(i+1)}_\alpha)
			\end{bmatrix} = \mathbb{J}^{(i)}_{\mathbf{k}}
		\end{equation*}
		where
		\begin{equation*}
			\mathbf{\Phi}_i = \left[\mathbf{\Xi}, 2\big( \mathbb{I}_{\mathbf{xu}}-\mathbb{I}_{\mathbf{xx}}(I_n \otimes {K^{(i)}}^\top) \big), \mathbb{M}^{(i)}_{\mathbf{kx}} - \mathbb{M}_{\mathbf{u}} \right].
		\end{equation*}
		Under the rank condition (specified in Lemma 3, \citet{zhang2023adaptive}) that ensures $\mathbf{\Phi}_i$ ($i \geq 0$) has full column rank, we can obtain unique $P^{(i+1)}_\alpha$, $M^{(i+1)}_\alpha$ and $H^{(i+1)}_\alpha$ by directly calculating
		\begin{equation}\label{3.15}
			\begin{bmatrix}
				\Vech(P^{(i+1)}_\alpha) \\
				\Vect(M^{(i+1)}_\alpha)\\
				\Vech(H^{(i+1)}_\alpha)
			\end{bmatrix} =({\mathbf{\Phi}_i}^\top\mathbf{\Phi}_i)^{-1}{\mathbf{\Phi}_i}^\top \mathbb{J}^{(i)}_{\mathbf{k}},
		\end{equation}
		and then $K^{(i+1)}$ is updated by
		\begin{equation}\label{3.15a}
			K^{(i+1)} = (R+H^{(i+1)}_\alpha)^{-1}M^{(i+1)}_\alpha.
		\end{equation}

		At this point, given a stabilizer $K^{(i)}$, we can execute one-step policy iteration defined by (\ref{3.10}) $\&$ (\ref{3.11}) in the model-free setting. Naturally, step \hyperref[item:step]{a.} in the discount method can be implemented directly along the sampled state and control trajectories. Thus, the remaining piece that is required to develop the model-free discount method is the way to determine the decrement $\Delta \alpha_j$	in step \hyperref[item:step]{b.} without requiring the knowledge of the system matrices. 
		
		For the calculation of decrement $\Delta \alpha_j$, multiple model-free approaches are available for computing the value of the cost function $J_{\alpha_j}(\cdot)$ at the point $K_{j+1}$. One way is to directly evaluate $J_{\alpha_j}(K_{j+1})$ via (\ref{2.4}). This approach is performed through the state trajectory samples generated from simulating system (\ref{2.3}) under the input $\widetilde{u}(\cdot) = K_{j+1}\widetilde{X}(\cdot)$. 
		
		From (\ref{2.5}), we observe that the computation of the value $J_{\alpha_j}(K_{j+1})$ can be simplified to calculate corresponding $P_{\alpha_j}$. Then, another way is to solve $P_{\alpha_j}$ from the identity
		\begin{equation*}
			\begin{aligned}
				&\mathbb{E}[{\widetilde{X}(t)}^\top P_{\alpha_j}\widetilde{X}(t)] - \mathbb{E}[{\widetilde{X}(t+\Delta t)}^\top P_{\alpha_j}\widetilde{X}(t+\Delta t)] \\
				& = \mathbb{E} \int_{t}^{t+\Delta t} {\widetilde{X}(s)}^\top \left( Q + K_{j+1}^\top R K_{j+1} \right) \widetilde{X}(s) ds
			\end{aligned}
		\end{equation*}
		where $\widetilde{X}(\cdot)$ is the solution of (\ref{2.3}) with $\widetilde{u}(\cdot) = K_{j+1}\widetilde{X}(\cdot)$. This way follows from the work of \citet{li2022stochastic}. The third way employs the aforementioned ADP method to determine $P_{\alpha_j}$. Specifically, calculate the matrix $\mathbb{J}^{(i)}_{\mathbf{k}}$ and  $\mathbb{M}^{(i)}_{\mathbf{kx}}$ corresponding to $K_{j+1}$, and then obtain $P_{\alpha_j}$ from (\ref{3.15}).
		
		Notably, the first and the second approach require new state trajectories collected through running system (\ref{2.3}) under the control input $\widetilde{u}(\cdot) = K_{j+1}\widetilde{X}(\cdot)$, whereas the third approach can reuse directly existing state and input data collected during the execution of the ADP-based PI method. Hence, the third approach is adapted in this paper, despite inevitably yielding unnecessary by-product $M_\alpha$ and $H_\alpha$. Now, we present the model-free discount method in Algorithm \ref{alg:Model-free discount}.
		
		\begin{algorithm}[H]
			\caption{The model-free discount method} \label{alg:Model-free discount}
			
			\textbf{Input:} Initial discount factor $\alpha_0$, initial feedback gain $\mathbf{O}_{m \times n}$\\ 
			\textbf{Initialization}: Set $\alpha \leftarrow \alpha_0$ and  $K \leftarrow K_0$\\
			\While{$\alpha > 0$}{
				Set \( i = 0 \) and $K^{(0)} \leftarrow K$
				
				\textbf{Data Collection}: Collect state data $\widetilde{X}(\cdot)$ and control data $\widetilde{u}(\cdot)$ by running system (\ref{2.3}) with 
				$
				\widetilde{u}(\cdot) = K^{(0)}\widetilde{X}(\cdot) + e(\cdot)
				$ 
				on time interval $[t_0,t_l]$, where $e(\cdot)$ is the exploration noise. 
				
				Compute $\mathbf{\Xi}$, $\mathbb{I}_{\mathbf{xx}}$, $\mathbb{I}_{\mathbf{xu}}$, $\mathbb{M}_{\mathbf{u}}$. 
				
				\Repeat{\( \|P^{(i+1)}_\alpha - P^{(i)}_\alpha\| < \epsilon \)}{
					Compute $\mathbb{J}^{(i)}_{\mathbf{k}}$, $\mathbb{M}^{(i)}_{\mathbf{kx}}$.
					
					Solve $P^{(i+1)}_\alpha$, $M^{(i+1)}_\alpha$, $H^{(i+1)}_\alpha$ from (\ref{3.15}).
					
					Update $K^{(i+1)}$ via 
					$
					K^{(i+1)} = (R+H^{(i+1)}_\alpha)^{-1}M^{(i+1)}_\alpha.
					$
					
					\( i \leftarrow i + 1 \)
				}
				Compute $\mathbb{J}^{(i)}_{\mathbf{k}}$, $\mathbb{M}^{(i)}_{\mathbf{kx}}$ and solve $P^{(i+1)}_\alpha$  from (\ref{3.15})
				
				Set $K \leftarrow K^{(i+1)}$ and $P_\alpha \leftarrow P^{(i+1)}_\alpha$
				
				Update $\alpha \leftarrow \alpha - \frac{\lambda_{1}(\Sigma_0)\lambda_{1}(Q )}{2\Tr(P_\alpha \Sigma_0)} \frac{\zeta-1}{\zeta}$.
			}
		\end{algorithm}
		
		Finally, we discuss the feasibility of Algorithm \ref{alg:Model-free discount}. 
		\begin{thm}
			Under the conditions of Theorem \ref{thm3.2}, Algorithm \ref{alg:Model-free discount} returns a stabilizing feedback gain for system (\ref{2.1}), using the same number of iterations as Algorithm \ref{alg:Model-based discount}.
		\end{thm}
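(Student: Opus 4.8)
The plan is to show that Algorithm~\ref{alg:Model-free discount} is a faithful data-driven re-implementation of Algorithm~\ref{alg:Model-based discount}: along any admissible trajectories of (\ref{2.3}) the quantities it computes coincide exactly with those produced by the model-based scheme, so the two algorithms generate identical iterates and therefore terminate after the same number of steps with the same output. The assertion then reduces to Theorem~\ref{thm3.2}.

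First I would fix an outer index $j$ and a discount factor $\alpha=\alpha_j$, and argue that the inner policy-iteration loop of Algorithm~\ref{alg:Model-free discount} reproduces the inner loop of Algorithm~\ref{alg:Model-based discount}. The key tool is Lemma~\ref{lem3.2}: for any stabilizing $K^{(i)}$, the solution $P^{(i+1)}_\alpha$ of the Lyapunov equation (\ref{3.10}) together with $M^{(i+1)}_\alpha$ and $H^{(i+1)}_\alpha$ satisfies the integral identity (\ref{3.13}) along \emph{every} admissible state--control pair $(\widetilde X(\cdot),\widetilde u(\cdot))$, in particular along the data actually collected under $\widetilde u(\cdot)=K^{(0)}\widetilde X(\cdot)+e(\cdot)$. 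Rewriting (\ref{3.13}) in the vectorized form (\ref{3.13a}) and stacking it over the $l$ sample trajectories yields the linear system $\mathbf\Phi_i\,[\Vech(P^{(i+1)}_\alpha);\Vect(M^{(i+1)}_\alpha);\Vech(H^{(i+1)}_\alpha)]=\mathbb J^{(i)}_{\mathbf k}$. Under the stated rank condition, $\mathbf\Phi_i$ has full column rank, so this system has a unique solution, which must therefore be the model-based triple $(P^{(i+1)}_\alpha,M^{(i+1)}_\alpha,H^{(i+1)}_\alpha)$; formula (\ref{3.15}) recovers exactly this solution, and (\ref{3.15a}) then returns the same $K^{(i+1)}$ as the model-based update (\ref{3.11}), using $M^{(i+1)}_\alpha=(R+D^\top P^{(i+1)}_\alpha D)K^{(i+1)}$ and $H^{(i+1)}_\alpha=D^\top P^{(i+1)}_\alpha D$. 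By induction on $i$, starting from the common initial gain $K^{(0)}=K$, the two inner loops produce identical sequences $\{K^{(i)}\}$ and $\{P^{(i)}_\alpha\}$; sharing the same stopping tolerance $\epsilon$, they terminate at the same index with the same converged policy. Proposition~\ref{thm3.1} guarantees that every intermediate $K^{(i)}$ is a stabilizer of $[A_\alpha,C;B,D]$, so Lemma~\ref{lem3.2} and the rank condition remain applicable throughout the loop.

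Next I would propagate this to the outer loop. After the inner loop, Algorithm~\ref{alg:Model-free discount} recomputes $\mathbb J^{(i)}_{\mathbf k}$ and $\mathbb M^{(i)}_{\mathbf kx}$ with $K^{(i)}$ equal to the converged policy $K$ and solves (\ref{3.15}); by Lemma~\ref{lem3.2} with $K^{(i)}=K$, the resulting $P_\alpha$ is precisely the solution of the Lyapunov equation (\ref{2.6}) for that $K$, i.e. the same $P_\alpha$ that Algorithm~\ref{alg:Model-based discount} solves for. Hence $\Tr(P_\alpha\Sigma_0)=J_\alpha(K)$ and the decrement $\Delta\alpha=\frac{\lambda_{1}(\Sigma_0)\lambda_{1}(Q)}{2\Tr(P_\alpha\Sigma_0)}\frac{\zeta-1}{\zeta}$ is identical in both algorithms, so $\alpha_{j+1}$ agrees. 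Inducting on $j$ from the common $\alpha_0$, the two algorithms run through exactly the same sequence of discount factors and therefore execute the same number of outer iterations; in particular Algorithm~\ref{alg:Model-free discount} terminates, by Theorem~\ref{thm3.2}, after at most $\lceil\alpha_0/\tilde\alpha\rceil$ iterations, and its final output equals that of Algorithm~\ref{alg:Model-based discount}, which is a stabilizer of $[A,C;B,D]$.

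I expect the main obstacle to be the argument in the second paragraph: one must verify carefully that the data-driven least-squares solution (\ref{3.15}) does not merely approximate but \emph{exactly} equals the model-based quantities. This rests on two points that need precise statement — that the off-policy identity (\ref{3.13}) holds identically for all admissible controls (so that stacking over samples introduces no error), and that the full-column-rank condition on $\mathbf\Phi_i$ makes the stacked system's solution unique and hence forces it to coincide with the model-based triple. Once these are in place, the rest is a routine double induction, on the inner index $i$ and the outer index $j$, transferring every guarantee of Theorem~\ref{thm3.2} verbatim to Algorithm~\ref{alg:Model-free discount}.
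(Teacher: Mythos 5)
Your proposal is correct and follows essentially the same route as the paper: establish that the data-driven updates (\ref{3.15})--(\ref{3.15a}) reproduce the model-based iterates exactly, and then transfer the guarantees of Theorem \ref{thm3.2}. The only difference is that the paper delegates this exact equivalence to a cited result (Theorem 4 of \citet{zhang2023adaptive}), whereas you reconstruct it from Lemma \ref{lem3.2} and the full-column-rank condition on $\mathbf{\Phi}_i$ --- a more self-contained but substantively identical argument.
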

		\begin{proof}
			By (Theorem 4, \citet{zhang2023adaptive}), performing policy evaluation (\ref{3.10}) and policy improvement (\ref{3.11}) in Algorithm \ref{alg:Model-based discount} is equivalent to obtaining $P^{(i+1)}_\alpha$, $M^{(i+1)}_\alpha$ and $H^{(i+1)}_\alpha$ from (\ref{3.15}) and updating $K^{(i+1)}$ via (\ref{3.15a}) in Algorithm \ref{alg:Model-free discount}. Hence, the conclusion established in Theorem \ref{thm3.2} for Algorithm \ref{alg:Model-based discount} remains valid for Algorithm \ref{alg:Model-free discount}. 
		\end{proof}
		
		\begin{remark}
			As stated in Remark \ref{remark2}, after setting $\Sigma_0 = I_n$, the Algorithm \ref{alg:Model-free discount}, originally designed for system (\ref{2.1}) with the random initial state, can also stabilize stochastic LTI systems with same system matrices and the deterministic initial state. Notably, at this case, the collected state and control trajectories correspond to the deterministic initial state. Moreover, the theoretical results established in \citet{zhang2023adaptive} show the feasibility of policy evaluation and policy improvement under the deterministic initial state.
		\end{remark}
	
	\section{Numerical experiment}\label{sec4}
		By the Kronecker product theory, the Lyapunov equation (\ref{3.10}) implies that
		\begin{equation}\label{4.1}
			\begin{aligned}
				&\left[ I_n \otimes (A_\alpha + BK^{(i)})^\top + (A_\alpha + BK^{(i)})^\top \otimes I_n  \right. \\
				& \qquad \left. + (C + DK^{(i)})^\top \otimes (C + DK^{(i)})^\top  \right] \Vect(P_{\alpha}^{(i+1)}) \\
				& \qquad  = - \Vect(Q + {K^{(i)}}^\top RK^{(i)}).
			\end{aligned}
		\end{equation}
		Because $K^{(i)}$ is a stabilizer of the system $[A_\alpha, C;B,D]$, Lemma \ref{lem2.1} ensures the existence and uniqueness of the solution $\Vect(P_{\alpha})$ to equation (\ref{4.1}). Then, in the implementation of Algorithm \ref{alg:Model-based discount}, we solve equation (\ref{4.1}) for $\Vect(P_{\alpha}^{(i+1)})$, thereby obtaining the solution $P_{\alpha}^{(i+1)}$ of the Lyapunov equation (\ref{3.10}).
		
		In the implementation of Algorithm \ref{alg:Model-free discount}, after we obtain $\mathbb{N}$ state/control data with the data sampled at $\mathbb{Q}$ equally spaced time points over the interval $[0,t_0]$ ($0 = s_0 < \cdots < s_q < \cdots< s_\mathbb{Q} = t_0$), where $\mathbb{N}$ and $\mathbb{Q}$ are large enough, similar to \cite{li2022stochastic}, we approximate $\mathbb{E}[\mathcal{M}(\widetilde{X}_h(t_0))]$ as 
		$$
		\mathbb{E}[\mathcal{M}(\widetilde{X}_h(t_0))] \approx \frac{1}{\mathbb{N}} \sum_{k = 1}^{\mathbb{N}} \mathcal{M}(\widetilde{X}_h^{(k)}(t_0))
		$$
		and calculate $\mathbb{E} \int_{0}^{t_0} \widetilde{X}_h(s) \otimes \widetilde{u}_h(s) ds$ in $\mathbb{I}_{\mathbf{xu}}$ as 
		$$
		\mathbb{E} \int_{0}^{t_0} \widetilde{X}_h(s) \otimes \widetilde{u}_h(s) ds \approx \frac{1}{\mathbb{N}} \sum_{k = 1}^{\mathbb{N}} \left[ \sum_{q=0}^{\mathbb{Q}-1} \left(\widetilde{X}^{(k)}_h(s_q) \otimes \widetilde{u}_h(s_q)\right) \cdot \frac{t_0}{\mathbb{Q}} \right].
		$$
		Similarly, we can approximate $\mathbb{I}_{\mathbf{xx}}$, $\mathbb{M}_{\mathbf{u}}$, $\mathbb{M}^{(i)}_{\mathbf{kx}}$, $\mathbb{J}^{(i)}_{\mathbf{k}}$. In addition, matrix $\Sigma_0$ can be directly approximated using
		$$
			\Sigma_0 \approx \frac{1}{\mathcal{N}} \sum_{r = 1}^{\mathcal{N}} \widetilde{X}^{(r)}(0)\widetilde{X}^{(r)}(0)^\top.
		$$
		where $\widetilde{X}^{(r)}(0)$ is randomly sampled from the distribution of the initial state, and $\mathcal{N}$ is large enough.
		
		Following this, we perform the Algorithm \ref{alg:Model-based discount} and Algorithm \ref{alg:Model-free discount} on a linear system with two-dimensional state space and one-dimensional control input for illustration. 
		
		We set
		\begin{equation*}
			A = 
			\begin{bmatrix} 
				3 & 6 \\ 
				11 & -7
			\end{bmatrix},
			B = 
			\begin{bmatrix} 
				7 \\ 
				2
			\end{bmatrix},
			C = 
			\begin{bmatrix} 
				0.6 & 0.1 \\ 
				-0.3 & 0.7
			\end{bmatrix},
			D = 
			\begin{bmatrix} 
				0.2 \\
				0.1
			\end{bmatrix}.
		\end{equation*}
		and choose the initial state distribution as the standard normal distribution, which implies $\Sigma_0$ is the identity matrix. In addition, we choose $Q = \diag(7, 3)$ and $R = 2$ in the LQR cost.
		
		By Lemma \ref{lem3.1}, we set the initial discount factor $\alpha_0 = 9$. We then independently sample multiple initial states from the standard normal distribution, and simulate the stochastic linear system (\ref{2.4a}) via the Euler-Maruyama scheme under the feedback gain $\mathbf{O}_{m\times n}$. Figure \ref{fig:state_trajectory} shows the mean value of the resulting state trajectories. Seen from Figure \ref{fig:state_trajectory}, the state trajectory tends to a neighborhood of zero as time goes to infinity, confirming $\mathbf{O}_{m\times n}$ as the stabilizer of the system $[A_{\alpha_0},C;B,D]$.
				
		\begin{figure}[t]
			\centering
			\includegraphics[width=0.7\textwidth]{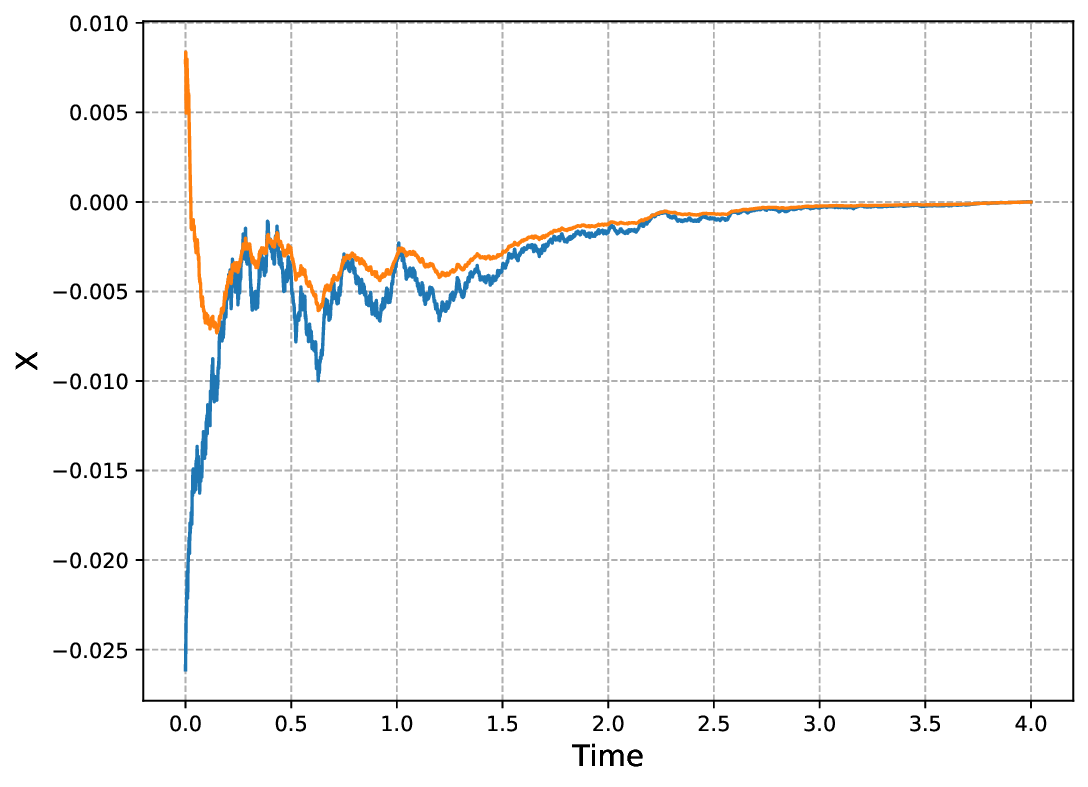}
			\caption{The average performance of state trajectory}
			\label{fig:state_trajectory}
		\end{figure}
		
		Implementing Algorithm \ref{alg:Model-based discount} from the initial values $\alpha_0 = 9$ and $K_0 = \mathbf{O}_{m \times n}$, we observe that this algorithm returns a stabilizing feedback gain $K = (-2.731,-1.027)$ with $\zeta = 10$ in only 5 steps. The dependence of the discount factor $\alpha$ on the number of iterations is shown in Figure \ref{fig:alpha_algorithm1}.
		
		\begin{figure}[t]
			\centering
			\includegraphics[width=0.7\textwidth]{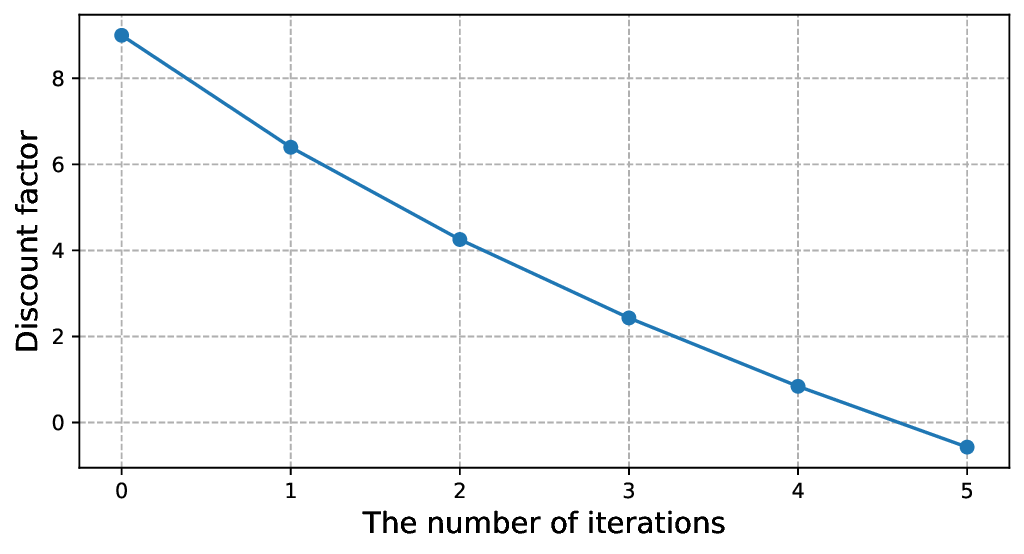}
			\caption{The iteration of discount factor $\alpha$ in Algorithm \ref{alg:Model-based discount}}
			\label{fig:alpha_algorithm1}
		\end{figure}
		
		In the model-free setting, we set the number of trajectories $\mathbb{N} = 10000$ and the number of grid $\mathcal{J} = 100$. State and input information are collected over each closed interval of 1-second length. In this case, if the knowledge of $\lambda_{n}(A+A^\top)$ and $\|C\|_2$ is available to set $\alpha_0 = 9$, then the performance of Algorithm \ref{alg:Model-free discount} is similar to that of Algorithm \ref{alg:Model-based discount}. Specifically, with parameter $\zeta = 10$, Algorithm \ref{alg:Model-free discount} terminates at the 5th step, and synthesizes a stabilizer $K = (-2.649,-0.849)$.
		
		However, the knowledge of $\lambda_{n}(A+A^\top)$ and $\|C\|_2$ may be unavailable in the model-free setting. In this paper, we choose a sufficiently large discount factor $\alpha = 200$ to satisfy the condition $\alpha_0 > \frac{1}{2}\left(\lambda_{n}(A+A^\top) + \|C\|_2^2\:\right)$ in Lemma \ref{lem3.1}. After 14 iterations, the discount factor $\alpha$ decreases to zero. The detailed iterative process of parameter $\alpha$ is shown in Figure \ref{fig:alpha_algorithm2}.
		
		\begin{figure}[t]
			\centering
			\includegraphics[width=0.7\textwidth]{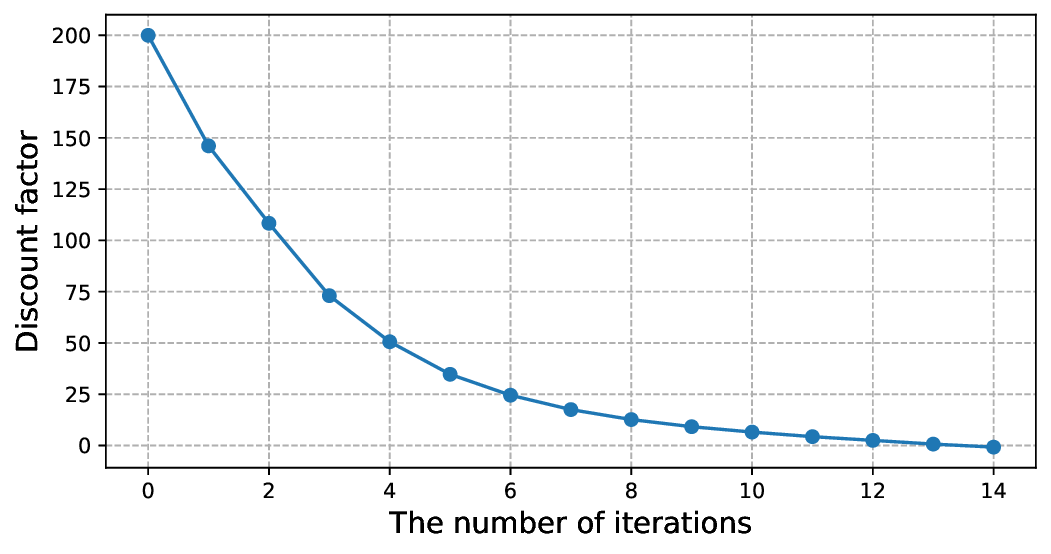}
			\caption{The iteration of discount factor $\alpha$ in Algorithm \ref{alg:Model-free discount}}
			\label{fig:alpha_algorithm2}
		\end{figure}
		
		In addition,  \citet{ozaslan2022computing} provide another way to determine parameter $\alpha_0$. They first set $\alpha_0 = 0$, then gradually increase it until the cost estimate achieves a certain threshold. Anyway, those methods are significantly easier to implement than selecting an initial stabilizer $K_0$ via numerical experiment.
		
	\section{Conclusion}
		In this paper, we first develop a discount method to compute stabilizing feedback gains for stochastic LTI systems with known system matrices. We subsequently extend this method to the case when the system matrices are completely unknown, using the idea of ADP algorithm. Both rigorous proof and numerical simulation guarantee the effectiveness of algorithms.
	
		Inspired by the work of \citet{perdomo2021stabilizing}, the discount method  developed in this paper might be extended to more complex stochastic systems. In addition, sample complexity of this algorithm is worth further consideration.

	\appendix
	\renewcommand{\appendixname}{}
	
	\section{Some helpful lemmas}
	
		Lemma \ref{lemA.1} and Lemma \ref{lemA.2} correspond to Lemma A.2 and Lemma A.3 in (\citet{Zhang2025Convergence}, Page 18), respectively. Given that these lemmas are repeatedly employed in this paper, we restate them here.
		
		\begin{lem}\label{lemA.1}
			For arbitrary positive semidefinite $M_1, M_2 \in \overline{\mathbb{S}_{+}^{n}}$, it holds that
			$$
			\lambda_1(M_1)\Tr(M_2) \leq \Tr(M_1M_2) \leq \lambda_n(M_1)\Tr(M_2).
			$$
		\end{lem}
		\begin{lem}\label{lemA.2}
			Suppose $K$ is a mean-square stabilizer of the system $[A,C;B,D]$. Let $P$ and $Y$ be the solution of the dual Lyapunov equations 
			\begin{equation*} 
				(A + BK)^\top P + P(A + BK) + (C + DK)^\top P(C + DK) + \Lambda = 0, 
			\end{equation*}
			\begin{equation*}
				(A + BK)Y + Y(A + BK)^\top + (C + DK)Y(C + DK)^\top + V = 0.
			\end{equation*}		 
			Then $\Tr(PV) =\Tr(Y\Lambda)$.
		\end{lem}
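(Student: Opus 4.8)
The plan is to use the cyclic invariance of the trace to pair the two Lyapunov equations against each other. Abbreviate $\widehat{A} := A+BK$ and $\widehat{C} := C+DK$, so that the two equations read
\begin{equation*}
\widehat{A}^\top P + P\widehat{A} + \widehat{C}^\top P\widehat{C} + \Lambda = 0, \qquad \widehat{A}Y + Y\widehat{A}^\top + \widehat{C}Y\widehat{C}^\top + V = 0.
\end{equation*}
First I would left-multiply the first equation by $Y$ and the second by $P$, and apply $\Tr(\cdot)$ to both, obtaining the scalar identities
\begin{equation*}
\Tr(Y\widehat{A}^\top P) + \Tr(YP\widehat{A}) + \Tr(Y\widehat{C}^\top P\widehat{C}) + \Tr(Y\Lambda) = 0
\end{equation*}
and
\begin{equation*}
\Tr(P\widehat{A}Y) + \Tr(PY\widehat{A}^\top) + \Tr(P\widehat{C}Y\widehat{C}^\top) + \Tr(PV) = 0 .
\end{equation*}

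The second step is to verify that the three quadratic terms in the first identity coincide, one by one, with the three quadratic terms in the second, using only $\Tr(MN)=\Tr(NM)$: one cyclic shift gives $\Tr(Y\widehat{A}^\top P) = \Tr(PY\widehat{A}^\top)$ and $\Tr(YP\widehat{A}) = \Tr(P\widehat{A}Y)$, while two shifts give $\Tr(Y\widehat{C}^\top P\widehat{C}) = \Tr(\widehat{C}Y\widehat{C}^\top P) = \Tr(P\widehat{C}Y\widehat{C}^\top)$. Subtracting the second scalar identity from the first then cancels all six quadratic contributions in pairs, leaving exactly $\Tr(Y\Lambda) - \Tr(PV) = 0$, which is the claim. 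An equivalent route is to vectorize both equations, observe that the two resulting Lyapunov operators acting on $\Vect(P)$ and $\Vect(Y)$ are mutual transposes, and read the identity off from $\Tr(PV) = \Vect(P)^\top \Vect(V)$; this comes down to the same bookkeeping.

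I do not anticipate a genuine obstacle here. The only care needed is tracking transposes when applying cyclicity to the $\widehat{C}^\top(\cdot)\widehat{C}$ term, and recording at the outset --- as ensured by Lemma \ref{lem2.1} in every instance used in this paper --- that $P$ and $Y$ are the unique (symmetric) solutions of their respective equations, so that all the manipulations above are well posed; symmetry of $P,Y,\Lambda,V$ is not actually needed for the final identity, but it holds throughout our applications.
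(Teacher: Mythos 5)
Your proof is correct: the trace-pairing argument (multiply each Lyapunov equation by the other's solution, take traces, and cancel the quadratic terms by cyclicity) is the standard duality proof of this identity, and your bookkeeping of the cyclic shifts, including the two-shift step for the $\widehat{C}^\top(\cdot)\widehat{C}$ term, checks out. The paper itself does not prove this lemma but imports it from the cited reference, so there is no in-paper argument to compare against; your proof fills that gap in essentially the canonical way.
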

		
		\begin{lem}\label{lemA.3}
			For arbitrary symmetric matrices $M_1, M_2 \in \mathbb{S}^{n}$, it holds that
			$$
			\lambda_{n}(M_1+M_2) \leq \lambda_{n}(M_1) + \lambda_{n}(M_2)
			$$
			$$
			\lambda_{1}(M_1+M_2) \geq \lambda_{1}(M_1) + \lambda_{1}(M_2)
			$$
		\end{lem}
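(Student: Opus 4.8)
The plan is to prove both inequalities via the Rayleigh--Ritz (Courant--Fischer) variational characterization of the extreme eigenvalues of a symmetric matrix. Recall that, with the indexing convention of this paper ($\lambda_1$ smallest, $\lambda_n$ largest), for any $M \in \mathbb{S}^n$ one has
$$
\lambda_n(M) = \max_{|x|=1} x^\top M x, \qquad \lambda_1(M) = \min_{|x|=1} x^\top M x.
$$
I would state this characterization first (it is standard and may be cited from, e.g., \citet{horn2012matrix}), since it reduces the two matrix inequalities to elementary facts about extrema of sums of scalar quadratic forms over the unit sphere.

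For the first inequality, I would write, for any unit vector $x$, the decomposition $x^\top(M_1+M_2)x = x^\top M_1 x + x^\top M_2 x$, and then maximize over $|x|=1$. Using the elementary bound that the maximum of a sum is at most the sum of the maxima, $\max_x(f(x)+g(x)) \le \max_x f(x) + \max_x g(x)$, I obtain
$$
\lambda_n(M_1+M_2) = \max_{|x|=1}\bigl(x^\top M_1 x + x^\top M_2 x\bigr) \le \max_{|x|=1} x^\top M_1 x + \max_{|x|=1} x^\top M_2 x = \lambda_n(M_1)+\lambda_n(M_2).
$$

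For the second inequality, the cleanest route is to deduce it from the first by negation. Since the eigenvalues of $-M$ are the negatives of those of $M$ in reversed order, one has $\lambda_1(M) = -\lambda_n(-M)$. Applying the already-established first inequality to $-M_1$ and $-M_2$ gives $\lambda_n(-M_1-M_2) \le \lambda_n(-M_1)+\lambda_n(-M_2)$, which, after negating both sides, rearranges exactly to $\lambda_1(M_1+M_2) \ge \lambda_1(M_1)+\lambda_1(M_2)$. Alternatively, one can repeat the variational argument directly, replacing $\max$ by $\min$ and invoking that the minimum of a sum is at least the sum of the minima.

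Main obstacle: there is no substantive difficulty here, as both claims are the extreme-eigenvalue cases of Weyl's inequality, and the variational characterization renders them immediate. The only point requiring genuine care is consistency with the paper's indexing convention ($\lambda_1$ smallest, $\lambda_n$ largest), so that the correct extremum is used throughout (maximum for $\lambda_n$, minimum for $\lambda_1$); confusing the two would flip the direction of both inequalities.
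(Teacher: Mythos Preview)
Your proposal is correct and follows essentially the same approach as the paper: both invoke the Rayleigh--Ritz variational characterization of the extreme eigenvalues and then use that the maximum (resp.\ minimum) of a sum is bounded by the sum of the maxima (resp.\ minima). The only cosmetic difference is that for the second inequality the paper simply says ``a similar proof applies,'' whereas you primarily use the negation trick $\lambda_1(M)=-\lambda_n(-M)$ before noting the direct variational argument as an alternative.
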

		\begin{proof}
			By (\citet{horn2012matrix}, Theorem 4.2.2, Page 176),
			\begin{equation*}
				\begin{aligned}
					\lambda_{n}(M) = \underset{x^\top x = 1}\Max x^\top M \:x \\
					\lambda_{1}(M) = \underset{x^\top x = 1}\Min x^\top M \:x
				\end{aligned}
			\end{equation*}
			hold for an arbitrary symmetric matrix $M \in \mathbb{S}^{n}$.

			For any $x \in \mathbb{R}^n$ with $x^\top x = 1$, it holds that
			$$
			x^\top (M_1 + M_2) x \leq \Max(x^\top M_1 x) + \Max(x^\top M_2 x),
			$$
			by the arbitrariness of unit vector $x$, one has 
			\begin{equation*}
				\underset{x^\top x = 1}\Max x^\top (M_1 + M_2) \:x \leq \underset{x^\top x = 1}\Max x^\top M_1 \:x + \underset{x^\top x = 1}\Max x^\top M_2 \:x,
			\end{equation*}
			then
			$$
			\lambda_{n}(M_1+M_2) \leq \lambda_{n}(M_1) + \lambda_{n}(M_2).
			$$
			A similar proof applies to the smallest eigenvalue, thereby establishing the full result.
		\end{proof}

\bibliographystyle{elsarticle-num-names} 
\bibliography{ref}

\end{document}